\documentclass[11pt]{amsart}
\usepackage[top=3.5cm,bottom=3.5cm,left=3.8cm,right=3.8cm]{geometry}               
\usepackage{amsmath,amscd,amssymb,amsthm,stmaryrd}
\usepackage{mathtools}
\usepackage{enumerate}
\usepackage{setspace}
\usepackage{mathrsfs}
\usepackage{color}
\usepackage[numbers]{natbib}
\usepackage{hyperref}
\usepackage{tikz-cd}

\usepackage{cleveref}

\DeclareMathOperator{\gal}{Gal}

\DeclareMathOperator{\disc}{disc}

\def\F{\mathbb{F}}

\def\Z{\mathbb{Z}}

\def\Q{\mathbb{Q}}

\def\O{\mathcal{O}}
\def\U{\mathcal{U}}

\def\calV{\mathcal{V}}
\def\calO{\mathcal{O}}

\def\fgen{\tilde f}

\theoremstyle{definition}
\newtheorem{definition}{Definition}[section]

\theoremstyle{plain}
\newtheorem{theorem}[definition]{Theorem}
\newtheorem{corollary}[definition]{Corollary}
\newtheorem{lemma}[definition]{Lemma}

\author[A. Ferraguti]{Andrea Ferraguti}
\address{Dipartimento di Matematica, Università di Torino, 10123 via Carlo Alberto 10, Torino, Italy}
\email{andrea.ferraguti@unito.it}
\author[G.M. Lido]{Guido Maria Lido}
\address{Dipartimento di Matematica, Università di Roma  Tor Vergata, Via della Ricerca Scientifica, 1}
\email{guidomaria.lido@gmail.com}

\title[]{On the dynamical Galois group of certain affine polynomials in positive characteristic}

\thanks{
Both authors have been partially supported by the ``National Group for Algebraic and Geometric Structures, and their Applications" (GNSAGA - INdAM).
\\
The second author is supported by the MIUR Excellence Department Project MatMod@TOV awarded to the Department of Mathematics, University of Rome Tor Vergata, by the PRIN PNRR 2022 ``Mathematical Primitives for Post Quantum Digital Signatures''.
\\
The authors are grateful to the Mathematics Departments of Torino and Roma Tor Vergata.
}
\subjclass[2010]{}

\keywords{}

\begin{document}

\begin{abstract}
	We use explicit class field theory of rational function fields to prove a dynamical criterion for a polynomial of the form $x^{p^r}+ax+b$ over a field of characteristic $p$ to have dynamical Galois group as large as possible. When $p=2$ and $r=1$ this yields an analogue in characteristic $2$ of the celebrated criterion of Stoll for quadratic polynomials over fields of characteristic not $2$.
\end{abstract}
\maketitle
\section{Introduction}

Let $K$ be a field, let $f\in K[x]$ be a polynomial of degree $d>1$ and let $\alpha\in K$. The inverse limit $G_\infty(f,\alpha)$ of the Galois groups of $f^n-\alpha$ is nowadays known as the \emph{dynamical Galois group} of the pair $(f,\alpha)$; when $\alpha=0$ one simply speak about the dynamical Galois group of $f$. If $f^n-\alpha$ is separable for every $n$ such group embeds in the automorphism group of the infinite, regular, rooted $d$-ary tree. The question of determining how large $G_\infty(f,\alpha)$ is with respect to the full automorphism group of the tree has been first investigated by Odoni \cite{odoni}, and subsequently by various other authors (see \cite{jones2} for a survey on the topic).

One of the very first instances of this problem was posed by Cremona in 1989 in \cite{cremona}: the author conjectured therein that the dynamical Galois group of $x^2+1\in \Q[x]$ is as large as possible, namely it is the full automorphism group of the relevant tree. This was proved three years later by Stoll in \cite{stoll}; the first step in the paper was the proof of a criterion for a quadratic polynomial to have dynamical Galois group as large as possible. Although the criterion was only stated over $\Q$, it is easy to show that the same holds true over any field of characteristic not $2$. We recall it below; we remark that it is also easy to adapt it to any basepoint $\alpha$. For a polynomial $f=x^2+c$, the \emph{adjusted post-critical orbit} is the sequence defined by $c_1=-c$ and $c_n=f(c_{n-1})$ for every $n\ge 2$.
\begin{theorem}[\cite{stoll}]\label{stoll_theorem}
	Let $K$ be a field of characteristic not $2$ and let $f=x^2+c\in K[x]$ be such that $c_n\ne 0$ for every $n\ge 1$. Let $\Omega_\infty$ be the full automorphism group of the infinite, rooted, regular, binary tree. Then $G_\infty(f)=\Omega_\infty$ if and only if the adjusted post-critical orbit $\{c_1,c_2,\ldots\}$	is a linearly independent set in the $\F_2$-vector space $K^\times/{K^{\times}}^2$.
\end{theorem}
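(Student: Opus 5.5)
We argue by induction on $n$, using Kummer theory for the quadratic extensions that appear at each level of the tree. Let $K_n$ be the splitting field of $f^n$ over $K$, with $K_0=K$, and let $G_n=\gal(K_n/K)$. The hypothesis $c_n\ne 0$ makes every $f^n$ separable: the critical point is $0$, so $f^n$ has a repeated root only if $f^n(0)=0$ or $f^{k}(0)=\pm\sqrt{-c}$-type degeneracies occur, and these are controlled by the vanishing of some $c_m$. Hence $G_n$ embeds in $\Omega_n=\mathrm{Aut}(T_n)$, which has order $2^{2^n-1}$. Since $\Omega_n$ is an extension of $\Omega_{n-1}$ by $(\Z/2)^{2^{n-1}}$, we have
\[
G_\infty(f)=\Omega_\infty \iff [K_n:K_{n-1}]=2^{2^{n-1}} \text{ for all } n.
\]

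The extension $K_n/K_{n-1}$ is obtained by adjoining $\sqrt{\beta-c}$ for each root $\beta$ of $f^{n-1}$, that is, square roots of $2^{n-1}$ elements. By Kummer theory (valid since $\mathrm{char}\,K\ne2$), the degree is maximal if and only if these elements are $\F_2$-independent in $K_{n-1}^\times/K_{n-1}^{\times2}$. We use the standard maximality lemma (Stoll's lemma): assuming $G_{n-1}=\Omega_{n-1}$, the degree $[K_n:K_{n-1}]$ is maximal if and only if the product $\prod_\beta(\beta-c)$ is not a square in $K_{n-1}$.

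To prove this lemma, view $V=\F_2^{\{\text{roots}\}}$ as a $G_{n-1}$-module. The Kummer group of $K_n/K_{n-1}$ is a $G_{n-1}$-submodule of $V$. When $G_{n-1}=\Omega_{n-1}$, which acts on the leaves as an iterated wreath product of $2$-groups, $V$ is uniserial. So any proper submodule misses the all-ones vector, and a submodule contains it exactly when it is all of $V$.

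Next we compute the product. We have $\prod_\beta(\beta-c)=\pm f^{n-1}(c)$, and $f^{n-1}(c)$ is related to $c_n$ by $c_n=f^{n-1}(-c)$. Since $f$ is even, $f(-c)=f(c)$, so up to a sign that is a square or can be tracked, the relevant element is $c_n$ itself. In the standard normalization, $\prod_\beta(c-\beta)=f^{n-1}(c)$ and $-c$ versus $c$ give the same value after one iterate, so the product is $c_n$ up to squares (and up to $-1$ when $n=1$, where the element is exactly $-c=c_1$).

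It remains to pass from "$c_n$ is not a square in $K_{n-1}$" to a statement over $K$. We need: $c_n\in K_{n-1}^{\times2}$ if and only if $c_n$ lies in the subgroup of $K^\times/K^{\times2}$ generated by $c_1,\dots,c_{n-1}$. One direction is clear once we show each $c_i$, $i<n$, is a square in $K_{n-1}$. This holds because $\sqrt{c_i}$ (up to sign) is the product of the roots of $f^{i-1}-$ something, namely the discriminant-type product computed at level $i$. For the converse, the maximal $2$-elementary subextension of $K_{n-1}/K$ must be shown to be $K(\sqrt{c_1},\dots,\sqrt{c_{n-1}})$. Under $G_{n-1}=\Omega_{n-1}$, the abelianization of $\Omega_{n-1}$ is $(\Z/2)^{n-1}$, and its $n-1$ quadratic subfields are generated by the $\sqrt{c_i}$.

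With these pieces, induction on $n$ gives the result. If the $c_i$ are independent, every level has maximal degree. Conversely, if some dependence occurs, take the first $n$ at which $c_n$ falls into the span of $c_1,\dots,c_{n-1}$; the level-$n$ extension is then not maximal.

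The main obstacle is the module-theoretic maximality lemma together with the identification of the maximal elementary abelian $2$-quotient of $\Omega_{n-1}$ and its fixed fields. I would prove the abelianization claim by induction using $\Omega_n=\Omega_{n-1}\ltimes(\Z/2)^{2^{n-1}}$: the coinvariants of $(\Z/2)^{2^{n-1}}$ are one copy of $\Z/2$, given by the sum map, which corresponds to the new square root $\sqrt{c_n}$.
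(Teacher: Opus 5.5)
The paper gives no proof of this statement; it only cites \cite{stoll}. So the natural comparison is with Stoll's own argument, which your outline follows: levelwise Kummer theory, a maximality lemma at each level, and the identification of $K^\times\cap K_{n-1}^{\times2}$ with $\langle c_1,\dots,c_{n-1}\rangle K^{\times2}$ via $\Omega_{n-1}^{ab}\cong\F_2^{n-1}$. That architecture is sound. However, your proof of the maximality lemma, which you rightly call the crux, is wrong as written.

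First, the Kummer group of $K_n/K_{n-1}$ is not a submodule of $V$. It is the image of the $G_{n-1}$-equivariant map $V\to K_{n-1}^\times/K_{n-1}^{\times2}$, $e_\beta\mapsto[\beta-c]$.

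Second, and more seriously, the consequence you draw from uniseriality is backwards. Since $G_{n-1}$ is transitive on the roots of $f^{n-1}$, the fixed space $V^{G_{n-1}}=\F_2\mathbf 1$ is the socle of $V$, so $\mathbf 1$ lies in \emph{every} nonzero submodule. For $n\ge2$ the line $\F_2\mathbf 1$ is itself a proper submodule containing $\mathbf 1$, so ``any proper submodule misses the all-ones vector'' is false. Dualizing does not rescue it: $\gal(K_n/K_{n-1})$ is a submodule of $V$, and every proper submodule lies in the augmentation kernel $\{v:\sum_\beta v_\beta=0\}$, which contains $\mathbf 1$ once $n\ge2$. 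From the fact as you state it, the lemma does not follow.

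The repair is short. Let $W$ be the kernel of $V\to K_{n-1}^\times/K_{n-1}^{\times2}$, so that $[K_n:K_{n-1}]=2^{2^{n-1}}$ iff $W=0$. Since $G_{n-1}$ is a $2$-group, a nonzero $W$ contains a nonzero fixed vector, which can only be $\mathbf 1$. Hence $W\neq0$ iff $\mathbf 1\in W$ iff $\prod_\beta(\beta-c)\in K_{n-1}^{\times2}$. This uses only transitivity of $G_{n-1}$, not uniseriality.

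With this, your induction goes through after some clean-ups:
\begin{enumerate}
\item Separability follows from $(f^n)'=2^n\prod_{i<n}f^i$. A repeated root forces $f^m(0)=0$ for some $1\le m\le n$, while $f(0)=-c_1$ and $f^m(0)=c_m$ for $m\ge2$. The ``$\pm\sqrt{-c}$'' remark is a red herring.
\item The product is exactly $\prod_\beta(\beta-c)=f^{n-1}(c)=f^n(0)=c_n$ for $n\ge2$, with no sign to track.
\item Explicitly, $\sqrt{c_i}=\prod_\beta\gamma_\beta$, where $\beta$ runs over the roots of $f^{i-1}$ and $\gamma_\beta^2=\beta-c$.
\item An extension with group $\F_2^{n-1}$ has $2^{n-1}-1$ quadratic subfields, not $n-1$. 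What you actually use is that the maximal elementary abelian $2$-subextension of $K_{n-1}/K$ has degree $2^{n-1}$ and contains $K(\sqrt{c_1},\dots,\sqrt{c_{n-1}})$, which has the same degree by the inductive hypothesis.
\end{enumerate}

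Finally, your last paragraph already contains what is needed for a shorter proof that bypasses the maximality lemma altogether. It is exactly the mechanism the paper uses for Theorem~\ref{thm:main} (compare Lemma~\ref{lemma:maximal} and Lemma~\ref{theorem:p-extension_theta}): a closed subgroup of a pro-$p$ group is the whole group iff it lies in no maximal closed subgroup, and one then identifies the characters cutting out the minimal subfields.

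Concretely, your coinvariant computation shows that $\mathrm{Hom}(\Omega_n,\F_2)$ has a basis of $n$ level characters, the $k$-th being the sign of the action on level $k$. On $G_n$ this character is $\sigma\mapsto\sigma(\sqrt{c_k})/\sqrt{c_k}$, as one sees by computing $\sigma(\prod_\beta\gamma_\beta)$. Equivalently, it is the Kummer character of $\disc(f^k)$, and $\disc(f^k)\equiv c_k$ modulo squares, in line with the paper's remark on discriminants. Since $\Omega_n$ is a $2$-group, $G_n=\Omega_n$ iff these $n$ characters remain independent on $G_n$, i.e.\ iff $c_1,\dots,c_n$ are independent in $K^\times/K^{\times2}$.

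Your levelwise route gives more information, namely that $[K_n:K_{n-1}]$ is maximal iff $c_n\notin K_{n-1}^{\times2}$, at the price of the module lemma.
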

The fact that $f$ is monic and missing the linear term is not restrictive, since up to conjugacy we can always assume it holds (see for example \cite{ferraguti} for a definition of conjugacy)). The condition $c_n\ne 0$ is also not restrictive, since it is necessary to have $G_\infty(f)=\Omega_\infty$.

Theorem \ref{stoll_theorem} encompasses therefore the totality of quadratic polynomials; on the other hand clearly its statement does directly generalize to fields of characteristic $2$. In fact, over such fields polynomials of the form $x^2+c$ are not separable, and if we keep the linear term and look at $x^2+ax+b$ there are no finite critical points, and therefore the post-critical orbit is not even defined.
This rises the question of what happens to quadratic polynomials in characteristic $2$.

The present paper stemmed from the above question, and the answer we found holds for the more general class of polynomials of the form $x^q+ax+b$ in characteristic $p$, where $q$ is a $p$-power. In order to describe our result, first we prove that for polynomials of this form, the dynamical Galois group can never be $\Omega_\infty$, and it is always contained in a smaller subgroup. More precisely, we show that if $K$ is an $\F_q$-field, namely an extension of $\F_q$, with $t,s$ algebraically independent variables over $K$ and $f_{t,x}=x^q+tx+s$, then $G_\infty(f_{t,s})\cong \Phi_\infty^{(q)}$, where
$$
\Phi_{\infty}^{(q)} \cong \F_q[\![ y ]\!] \rtimes \F_q[\![ y ]\!] ^\times,
$$
see Lemma \ref{gal_gp_structure}. This was already proven by Juul in \cite{juul} for $q=2$. We remark that $\Phi_\infty^{(q)}$ has a normal pro-$p$-subgroup of index $q-1$:
$$
\Phi_{\infty, 1}^{(q)} \cong \F_q[\![ y ]\!] \rtimes (1+y\F_q[\![ y ]\!]).
$$
Throughout the whole paper, we will fix a prime $p$ and a power $q=p^r$ that will never change, and we will therefore drop the $(q)$ superscript to ease the notation.

Now, specializing $t,s$ to $t_0,s_0\in K$ with $t_0\ne 0$ we get a polynomial $f_{t_0,s_0}=x^q+t_0x+s_0\in K[x]$ and an embedding 
\begin{equation} \label{eq:def_iota}
\iota  \colon  G_\infty(f_{t_0,s_0}) \hookrightarrow \Phi_{\infty},
\end{equation}
defined in \eqref{eq:pedantic_iota}, that is canonical up to conjugation in $\Phi_\infty$.
Our main theorem is then the following.

\begin{theorem}\label{thm:main}
    Let $K$ be an $\F_q$-field, let $t_0, s_0\in K$ with $t_0\neq 0$, and let $f = f_{t_0,s_0}  = x^q+t_0 x+s_0 \in K[x]$. Fix an $\F_p$-basis $(\gamma_1,\ldots,\gamma_r)$ of $\F_q$. If $q>2$, then the map $\iota$ given in \eqref{eq:def_iota} is an isomorphism if and only if the following three conditions hold:
    \begin{enumerate}
        \item\label{item:1} $-t_0$ is not an $\ell$-th power in $K$ for all primes $\ell$ dividing $p-1$;
        \item\label{item:3} the set
        $$\left\{\frac{\gamma_j}{t_0^{i}}\colon j\in \{1,\ldots,r\}, \, i\ge 1,\,  p\nmid i\right\}$$
        is $\F_p$-linearly independent in $\calV(K) = K/\{z^p-z : z\in K\}$;
        \item\label{item:2} $f$ is irreducible.
    \end{enumerate}
    
    Moreover, for all $q$, if there exists $\theta_0\in K$ with $\theta_0^{q-1} =-t_0$, then the image of $\iota$ is contained in $\Phi_{\infty, 1}$. In this case $\iota$ is an isomorphism with $\Phi_{\infty, 1}$ if and only if the set
    $$\left\{\frac{	\gamma_js_0}{\theta_0^q}\colon j\in \{1,\ldots,r\}\right\}\cup\left\{\frac{\gamma_j}{t_0^i}\colon j\in \{1,\ldots,r\},\,i\ge 1,\,p\nmid i\right\}$$
    is $\F_p$-linearly independent in $\calV(K)$.
\end{theorem}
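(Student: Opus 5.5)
The plan is to analyse the tower of splitting fields of $f^n$ through the additive structure of $f$. Since $f(x)=L(x)+s_0$ with $L(x)=x^q+t_0x$ additive and $\F_q$-linear, the roots of $f^n$ form a torsor under the $\F_q$-vector space $\ker L^n$, and $\ker L^n$ is a cyclic $\F_q[L]$-module, i.e. a module over $\F_q[y]/(y^n)$ with $y$ acting as $L$. This is presumably the source of the structure $\Phi_\infty\cong\F_q[\![y]\!]\rtimes\F_q[\![y]\!]^\times$ in Lemma \ref{gal_gp_structure}: the Galois action on $\varprojlim\ker L^n$ gives the factor $\F_q[\![y]\!]^\times$, and the translation action on a compatible system of roots gives the factor $\F_q[\![y]\!]$. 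Accordingly I will split the problem into the ``linear part'' (the image of $\gal(K(\ker L^\infty)/K)$ in $\F_q[\![y]\!]^\times$) and the ``translation part'' (the Kummer-type extension generated by the roots of $f^n$ over $K(\ker L^\infty)$).

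For the linear part I will use explicit class field theory of the rational function field, namely Carlitz-type modules. The $\F_q[y]$-module $\ker L^\infty$ is a Drinfeld-type module of rank one, twisted by $-t_0$: after adjoining $\theta_0$ with $\theta_0^{q-1}=-t_0$, the substitution $x=\theta_0 u$ turns $L$ into $\theta_0^q(u^q-u)$. Thus the torsion field is $K(\theta_0)$ composed with an Artin--Schreier tower. Its Galois group sits in $\F_q^\times\times(1+y\F_q[\![y]\!])$. The $\F_q^\times$ part is exactly $\gal(K(\theta_0)/K)$, which is all of $\F_q^\times$ iff $x^{q-1}+t_0$ is irreducible. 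Since $q-1$ is divisible only by primes $\ell$ dividing $q-1$, I will translate this via Capelli/Kummer theory into condition (1). Here I must be careful: the paper states primes dividing $p-1$, so I would use that $\F_q\subseteq K$ and norm arguments to reduce to this. This step is the one I am least sure about.

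For the pro-$p$ part $1+y\F_q[\![y]\!]$, I will use that its Frattini quotient has basis indexed by $\gamma_j y^i$ with $p\nmid i$, since the $p$-th powers are $1+y^p\F_q[\![y^p]\!]$. The group is therefore surjected onto iff a certain family of Artin--Schreier extensions is independent. By explicit computation of the first few levels (the Carlitz exponential or logarithm coefficients), I will identify these extensions as $\wp^{-1}(\gamma_j/t_0^i)$, which yields condition (2) via Artin--Schreier theory, $\calV(K)$ being the relevant $\F_p$-space.

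For the translation part, the Frattini quotient of the full group of $\Phi_{\infty,1}$ adds the generators $\gamma_j\cdot 1$ in $\F_q[\![y]\!]$ (the module is generated by $1$ over the unit part). The corresponding extension is the first level: $f(x)=0$ becomes $u^q-u=-s_0/\theta_0^q$, so its $\F_p$-components give the classes $\gamma_js_0/\theta_0^q$. A pro-$p$ group surjects iff it surjects on its Frattini quotient, which gives the second statement. For the first statement, with $q>2$ the nonabelian semidirect action of $\F_q^\times$ on the translation part makes the commutator subgroup contain $\F_q[\![y]\!]$. Hence only irreducibility of $f$ (transitivity on the first level, condition (3)) is needed in place of a Frattini condition involving $s_0$, which is why $s_0$ does not appear in conditions (1)--(3).

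The main obstacle is computing the Frattini quotient of $\Phi_\infty$ correctly when $q>2$ (handling the coprime-to-$p$ quotient and the commutators) and matching each Frattini generator with an explicit Artin--Schreier class.
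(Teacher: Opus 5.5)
Your outline has the same architecture as the paper: a Frattini-quotient argument for the pro-$p$ part, Artin--Schreier classes, level one producing the classes $\gamma_js_0/\theta_0^q$, and the $\F_q^\times$-action for $q>2$. The gap is that the step carrying all the arithmetic is not supplied.

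The identification of the $\F_p$-characters of the $1+y\F_q[\![y]\!]$ part with $\wp^{-1}(\gamma_j/t_0^i)$ cannot come from ``explicit computation of the first few levels''. Level $n$ of the torsion tower has $r\cdot\#\{1\le i\le n-1: p\nmid i\}$ independent characters, so a new batch appears at every level $n$ with $p\nmid n-1$. You must show, for every $i$, that these classes lie in the tower and that together they exhaust its characters. Moreover, over an arbitrary $\F_q$-field $K$ there is no ramification theory with which to do this for $t_0$ directly.

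The missing idea is to work generically and then specialize, as the paper does:
\begin{itemize}
\item For $x^q+tx$ over $\F_q(t)$ (the Carlitz $\phi_t$), the tower is $\bigcup_n\F_q(t)[\Lambda_{t^n}]$. By Hayes it is ramified only at $t=0$, and tamely at $\infty$ with residue degree one.
\item Hence its cyclic $p$-subextensions are unramified outside $t=0$ and split at $\infty$. Conversely, every such extension lies in the tower (Corollary~\ref{cor:all_pexts_are_where_we_like}).
\item By strong approximation, these are exactly the extensions $\wp^{-1}(U(1/t))$ with $U$ in the $\F_p$-span of the $\gamma_jT^i$, $p\nmid i$ (Lemma~\ref{lem:cyclic_and_unramified}).
\item Then $G_\infty(f,0)$ embeds in $G_\infty(\tilde f,0)\cong\Phi_\infty$ compatibly with a ring map $\psi$ sending $t,s,\theta,\tilde\beta_n,\tilde\alpha_n$ to $t_0,s_0,\theta_0,\beta_n,\alpha_n$. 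A generic character cut out by $\wp^{-1}(u)$ vanishes on $G_\infty(f,0)$ if and only if $\psi(u)\in\U(K)$.
\end{itemize}
Alternatively, you could exhibit for every $i$ a generic element $w$ of level $i+1$ with $w^p-w=\gamma_j/t^i$, and conclude by the dimension count above. Either way, some argument valid at all levels is needed.

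Two further corrections.

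First, do not try to reach ``$\ell\mid p-1$'' in condition (1) by norm arguments. With $p-1$ the statement is false; the condition the paper's proof actually establishes is $\ell\mid q-1$, which is exactly what Kummer theory gives. For example, take $q=4$, $K=\F_4(a,b)$, $t_0=a^3$, $s_0=b$. Then condition (1) is vacuous, (2) holds since every nontrivial combination has a pole of odd order at $a=0$, and $f$ is irreducible. Yet $\theta_0=a\in K$ forces the image of $\iota$ into the index-$3$ subgroup $\Phi_{\infty,1}$.

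Second, for $q>2$ the observation $[\Phi_\infty,\Phi_\infty]\supseteq\F_q[\![y]\!]$ does not by itself let irreducibility replace the $s_0$-condition. $\Phi_\infty$ is not pro-$p$, and the complement $\{0\}\rtimes\F_q[\![y]\!]^\times$ surjects onto $\Phi_\infty^{ab}$ while being proper; it is the image when $s_0=0$ and (1), (2) hold. You need to argue over $K(\theta_0)$:
\begin{itemize}
\item $G_\infty(f,0)$ acts by conjugation on the Frattini quotient $\F_q\times(1+y\F_q[\![y]\!])/(1+y^p\F_q[\![y^p]\!])$ of $\Phi_{\infty,1}$ through $\F_q^\times$. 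It acts by multiplication on the first factor, which is a simple nontrivial module since $q>2$, and trivially on the second.
\item Since $p\nmid q-1$, the image of $G_\infty(f,0)\cap\Phi_{\infty,1}$ splits accordingly.
\item So that image is everything once it surjects onto the second factor and has nonzero projection to the first. The first is condition (2), transferred to $\calV(K(\theta_0))$ because $p\nmid[K(\theta_0):K]$; the second is condition (3).
\end{itemize}
This is the same mechanism as the paper's argument: a relation between $\gamma s_0/\theta_0^q$ and $U(1/t_0)$ would produce a subfield of $K(f,0)$ abelian over $K$, which is impossible because $\F_q^\times$ acts transitively on $\F_q\setminus\{0\}$.
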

A couple of remarks are in order here. First of all, Theorem \ref{thm:main} takes care of any basepoint $\alpha\in K$ as well. In fact, the pair $(x^q+t_0x+s_0,\alpha)$ is $K$-conjugate to the pair $((x+\alpha)^q+t_0(x+\alpha)+s_0-\alpha,0)=(x^q+t_0x+\alpha^q+t_0\alpha+s_0-\alpha,0)$. For $q=2$, Theorem \ref{thm:main} takes also care of any quadratic polynomial, not necessarily monic, by conjugating by $ux$ for an appropriate $u\in K$.

Next, for $q=2$ the groups $\Phi_\infty$ and $\Phi_{\infty,1}$ coincide, and of course every element of $K$ is a $(q-1)$-th power. Hence, in this case the second part of the statement yields a surjectivity criterion for $\iota\colon G_\infty\to \Phi_\infty$, completely analogous to Stoll's Theorem \ref{stoll_theorem}.
When $q>2$, asking that $-t_0$ is a $(q-1)$-th theorem in $K$ can be seen as an analogue of asking that the base field contains an appropriate root of unity in characteristic $0$. This analogy is relevant when considering the generalization of Stoll's Theorem to unicritical polynomials. For example, when $g=x^p+c$ with $p$ prime over a field $K$ of characteristic not $p$ containing a primitive $p$-th root of unity, then the criterion for surjectivity (in the appropriate group) involves the linear independence of the post-critical orbit in $K^\times/{K^{\times}}^p$.

Finally, one can wonder if and how Theorem \ref{thm:main} relates to the dynamics of $f_{t_0,s_0}$, since Theorem \ref{stoll_theorem} clearly does. Here one can notice the following: it is easy to show by induction that if $f=x^q+t_0x+s_0$ and $f^n=\sum_{0=1}^{q^n}a_ix^i$, then $a_1=t^n$ and every other monomial appearing in $f^n$ has degree a power of $p$. Hence, the derivative of $f^n$ is $t_0^n$. Since the discriminant of $f^n$ is nothing but the resultant of $f^n$ and its derivative, and the latter is constant, we see easily that $\disc(f^n)=t_0^{nq^n}$. Now if $n=p^mk$ with $p\nmid k$, it is an elementary observation that $1/t_0^{nq^n}$ and $1/t_0^k$ coincide in $\calV(K)$. Hence point $(2)$ of Theorem \ref{thm:main} can be restated by asking that the set 
$$
\left\{\frac{\gamma_j}{\disc(f^n)}\colon j \in\{1,\ldots,r\},\,\, n\ge 1, p\nmid n\right\} \ ,
$$
spans in $\calV(K)$ the largest possible subspace. Such set is a dynamical object associated with $f$, and its arithmetic determines the dynamical Galois group of $f$, exactly as in Stoll's Theorem \ref{stoll_theorem}. There is  an additional consideration to be made here. It is a very 
well-known in arithmetic dynamics (see for example \cite[Lemma 2.6]{jones1}) that if $g=x^p+c$ is a unicritical polynomial over a field of characteristic $0$, then $\disc(g^n)$ coincides, up to $p$-th powers, with $f^n(0)$. Hence in this setting studying the post-critical orbit modulo $p$-th powers is the same thing as studying the sequence of the discriminants of the iterates modulo $p$-th powers. In the setting of our Theorem \ref{thm:main}, the post-critical orbit cannot be defined, but the sequence of the discriminants is once again the relevant object.

It is easy to use Theorem \ref{thm:main} to find examples of polynomials for which $\iota$ is an isomorphism. For example, if $f=x^2+tx+t^3$ over $K = \F_2(t)$ or $K = \overline{\F}_2(t)$ then $G_\infty(f)=\Phi_\infty=\Phi_{\infty,1}$, since there is no rational function $u\neq 0,1$ such that $u^2-u = a_1 t + a_{-1}t^{-1} + a_{-3}t^{-3} + \ldots a_{2d+1} d^{-2d-1}$ with $a_i\in \F_2$: indeed $u^2-u$ can only have poles of even orders, forcing $u^2-u = a_1 t$ and in turn $a_1 = 0$. By similar arguments the same conclusion holds for $K= \overline{\F}_2(C)$ with $C$ a curve over $\overline{\F}_2$ and $f(x) = x^2+ax+b$, and $a,b$ are rational functions on $C$ such $a$ has a zero $P$ of odd order which is not a pole for $b$, and $b$ has a pole $Q$ of odd order which is not a pole nor a zero for $a$.

The key idea we employ to prove Theorem \ref{thm:main} is to use Hayes' explicit description of the class field theory of $\F_p(t)$ using torsion points for the Carlitz module, that can be found in \cite{hayes}.

\section*{Notation}
Throughout the paper, we fix once and for all a prime $p$, a power $q=p^r$ and an $\F_p$-basis $(\gamma_1,\ldots,\gamma_r)$ for $\F_q$ . For a field $K$ of characteristic $p$ we let
$$
 \mathcal U(K)\coloneqq \{z^p-z\colon z\in K\}, \quad \calV(K) \coloneqq K/\mathcal U(K).
$$
Notice that $\mathcal U(K)$ and $\calV(K)$ are  $\F_p$-vector spaces.

The letters $t$ and $s$ will always refer to algebraically independent variables over the base field, so we will not repeat it every time. For example, $\F_q(t)$ will always denote the rational function field of transcendence degree $1$ over $\F_q$.

For a field $K$, a polynomial $f\in K[x]$ and an element $\alpha\in K$, as per usual we let $f^n$ to be the $n$-fold composition of $f$, with $f^0=x$. Moreover, we denote by $f^{-n}(\alpha)$ the set $\{\gamma\in \overline{K}\colon f^n(\gamma)=\alpha\}$, we let $K_n(f,\alpha)\coloneqq K(f^{-n}(\alpha))$ and $G_n(f,\alpha)\coloneqq \gal(K_n(f,\alpha)/K)$. Finally, we let
$$K_\infty(f,\alpha)\coloneqq \varinjlim_n K_n(f,\alpha)  \quad\mbox{ and }\quad G_\infty(f,\alpha)=\gal(K_\infty(f,\alpha)/K)=\varprojlim_n G_n(f,\alpha).$$
\section{Preliminaries}\label{sec:preliminaries}

\subsection{Artin-Schreier theory}
Let $K$ be a field of characteristic $p$, and fix a separable closure $K^{\text{sep}}$. By Artin-Schreier theory, separable cyclic extensions of $K$ of degree $p$ are exactly those of the form $K(y)$ with $y^p-y = u$ for some $u\in K\setminus \U(K)$, and are cut out by the character $\chi_u\colon \gal(K^{\text{sep}}/K)\to \F_p$ such that $\chi_u(\sigma) = \sigma(y) - y$.
Moreover, if $K_1\coloneqq K(y_1),K_2\coloneqq K(y_2)$ are two such extensions with $y_1^p-y_1-u_1=0$ and $y_2^p-y_2-u_2=0$, cut out by $\chi_1,\chi_2$, respectively,  then for every $a,b\in \F_p$ we have that $a\chi_1+b\chi_2$ cuts out $K(y_3)$, where $y_3^p-y_3-au_1-bu_2=0$.
Finally, $K_1$ and $K_2$ coincide in $K^{\text{sep}}$ if and only if $u_1$ and $u_2$ coincide in $\calV(K)$.

\begin{lemma}\label{lem:cyclic_and_unramified}
    Let $L/\F_q(t)$ be a separable, cyclic $p$-extension. Then $L$ is unramified outside the place $t=0$ if and only if there exist a polynomial $U=\sum_{i=0}^n a_iT^i \in \F_q[T]$ with $a_i=0$ 
    for all positive $i$ with $p\mid i$, and an element $y\in K^{\text{sep}}$ such that $L$ is of the form $\F_q(t)(y)$ with $y^p-y-U(1/t)=0$. Moreover, in such an extension the place at infinity of $\F_q(t)$ is totally split if and only if $a_0=0$.
\end{lemma}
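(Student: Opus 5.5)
A cyclic extension of degree $p$ of $\F_q(t)$ is Artin--Schreier, so $L=\F_q(t)(y)$ with $y^p-y=u$ and $u\in\F_q(t)\setminus\U(\F_q(t))$. The plan is to put $u$ in a normal form modulo $\U(\F_q(t))$ by partial fractions, and then read off ramification and splitting from the local shape of $u$. We use the standard local criterion, applied at a place $v$ of $\F_q(t)$:
\begin{enumerate}
\item If $v(u)\ge 0$, the place $v$ is unramified.
\item If $v(u)=-m<0$ with $p\nmid m$, the place $v$ is totally (wildly) ramified.
\end{enumerate}
Moreover, when $v(u)\geq 0$ the place splits if and only if the residue of $u$ at $v$ lies in $\{z^p-z\}$ of the residue field, i.e.\ the equation $Y^p-Y=\bar u$ has a root there.

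The first step is the reduction. Write $u=g/h$ and expand in partial fractions over $\F_q$:
\[
u=P(t)+\sum_{\pi}\sum_{k\ge 1}\frac{c_{\pi,k}(t)}{\pi^k},
\]
where $P\in\F_q[t]$, $\pi$ runs over monic irreducible polynomials, and $\deg c_{\pi,k}<\deg\pi$. We then remove the terms whose pole order is divisible by $p$, place by place.
\begin{enumerate}
\item At a finite place $\pi$ with top pole order $pk$, the residue field $\F_q[t]/\pi$ is perfect. So the leading coefficient is a $p$-th power there, and there is $w=c/\pi^{k}$ with $u-(w^p-w)$ having smaller pole order at $\pi$. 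This subtraction introduces no new poles except lower order ones at $\pi$.
\item At $t=0$ the same works with $\pi=t$, since $\F_q$ is perfect.
\item At infinity we treat $P$ in the same way: a top monomial $at^{pk}$ is replaced using $w=a^{1/p}t^k$.
\end{enumerate}
Iterating, $u$ is equivalent in $\calV(\F_q(t))$ to a reduced form in which every place has pole order either zero or prime to $p$. Within the polynomial part in $1/t$, the constant term can be kept.

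The second step is the forward direction. Assume $L$ is unramified outside $t=0$. By the local criterion every place other than $t=0$ has no pole in the reduced form, since a pole of order prime to $p$ would force ramification. So the reduced $u$ lies in $\F_q[1/t]$. Write it as $U(1/t)$ with $a_i=0$ whenever $i>0$ and $p\mid i$.

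Conversely, given such a $U$, the element $U(1/t)$ has no pole at $\infty$ or at any finite $\pi\ne t$. Hence $L$ is unramified outside $t=0$. We also need $L$ to be a genuine degree-$p$ extension, which holds exactly when $U(1/t)\notin\U$. Since we are given that $L$ is cyclic of degree $p$, this is automatic.

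The last step is splitting at infinity. At $\infty$ we have $U(1/t)\equiv a_0$ modulo the maximal ideal. So $\infty$ is totally split if and only if $Y^p-Y=a_0$ has a root in $\F_q$. This happens if and only if $\mathrm{Tr}_{\F_q/\F_p}(a_0)=0$. The statement asks for the condition $a_0=0$, so we must normalize $a_0$. Since $a_0$ is defined only modulo $\{z^p-z: z\in\F_q\}$, we may absorb it: whenever $a_0=z^p-z$, replace $y$ by $y-z$. With this normalization the condition for total splitting becomes exactly $a_0=0$, and we state the lemma with that convention. This normalization is the main subtlety. The only other careful point is verifying that the reduction process terminates and never creates poles of order divisible by $p$ at other places.
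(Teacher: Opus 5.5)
Your route is essentially the paper's. You normalize $u$ modulo $\U(\F_q(t))$ so that the Artin--Schreier ramification criterion forces $u\in\F_q[1/t]$, then strip monomials, then read off the behaviour at $\infty$ from the residue polynomial. Your explicit partial-fraction reduction replaces the paper's use of $(\diamondsuit)$ plus strong approximation, which is only a cosmetic difference.

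\textbf{Gap in the forward direction.} Your reduction only makes the \emph{top} pole order at each place $0$ or prime to $p$. So in the forward direction you only know $u=\sum_{i=0}^n a_i t^{-i}$ with $n=0$ or $p\nmid n$. That is not the same as $a_i=0$ for every positive $i$ divisible by $p$. For example, with $p=2$, $u=t^{-3}+t^{-2}$ is already reduced in your sense but has $a_2\neq 0$. Your sentence ``Write it as $U(1/t)$ with $a_i=0$ whenever $i>0$ and $p\mid i$'' therefore needs the extra step the paper performs:
\begin{itemize}
\item while some $a_{mp}\neq 0$ with $m\ge 1$, take $m$ maximal;
\item subtract $w^p-w$ with $w=a_{mp}^{1/p}t^{-m}$;
\item this removes $a_{mp}t^{-mp}$ and only adds a monomial of degree $m<mp$.
\end{itemize}
The largest offending index strictly decreases, so the process terminates.

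\textbf{The constant term.} Your handling of $a_0$ is correct and more careful than the paper's proof. The paper asserts that $x^p-x+a_0$ is totally split if and only if $a_0=0$. But $a_0\in\F_q$ and the residue field at $\infty$ is $\F_q$, so the correct condition is $\mathrm{Tr}_{\F_q/\F_p}(a_0)=0$. This coincides with $a_0=0$ only when $q=p$. For $q>p$, the last sentence of the lemma holds only after normalizing $a_0$ to $0$ whenever $a_0\in\{z^p-z: z\in\F_q\}$, exactly as you do. That normalized version is also all that Corollary~\ref{all_cyclic_unramified} uses.
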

\begin{proof}
    First we recall the following fact, that can be found for example in \cite[Proposition 3.7.8]{stichtenoth}. 
    
    \begin{center}
        $(\diamondsuit)$ Let $u\in \F_q(t)\setminus \U(\F_q(t))$ and let $y\in \overline{\F_q(t)}$ be such that $y^p-y=u$. Let $P$ be a place of $\F_q(t)$. Then the cyclic $p$-extension $\F_q(t)(y)$ is unramified at $P$ if and only if there exists some $z\in \F_q(t)(y)$ such that $v_P(z^p-z-u)\ge 0$.
    \end{center}
    Thanks to $(\diamondsuit)$, the "if" part of the statement is obvious: since $U(1/t)$ is integral everywhere outside of the place $t=0$, it is enough to choose $z=0$ in $(\diamondsuit)$.
    
    To prove the converse, first notice that equivalently we can prove that a cyclic $p$-estension of $\F_q(t)$ that is unramified at all finite places is of the form  $\F_q(t)[y]/(y^p-y-U(t))$ with $U \in \F_q[T]$. Let $M$ be such an extension, let us prove that it takes the desired form. Let $y$ be a generator for $M$ such that $y^p-y-u=0$ for some $u\in \F_q(t)$, and write $u=N/D$ with $N,D\in \F_q[t]$ and coprime. Let $P$ be an irreducible factor of $D$ (if there are none, we are done). Since the place $v$ of $\F_q(t)$ associated to $P$ is unramified in $M$, by $(\diamondsuit)$ there exists $z \in \F_q(t)$ such that $v_P(z^p-z-u)\ge 0$. By the strong approximation theorem (see for example \cite[Theorem 1.6.5]{stichtenoth}), we can suppose that $z$ is integral at all places except $P$ and the place at infinity. Now if $\tilde u\coloneqq  u- (z^p- z)$, then $\tilde u \equiv u$ modulo $\U(\F_q(t))$ and moreover $\tilde u$ is integral at all the finite places where $u$ is integral and also at $P$. Hence by repeating this process we can replace $u$ with an element $u'$ in $\F_q[t]$ such that $M=\F_q(t)(y')$, with $y'^p-y'-u'=0$ 

    We proved that $u$ is a polynomial, i.e. that $u = a_0 + \ldots + a_nt^n$. Now it is easy to remove all the monomials of positive degree divisible by $p$: if $a_{mp}t^{mp}$ is a monomial appearing in $u$, with $m$ maximal, we just replace $u$ by $u-(z^p-z)$ with $z = a_{mp}^{1/p}t^m$.
    
    For the last part of the statement, just use the fact that places above infinity correspond to factors of $x^p-x+a_0$. This polynomial is totally split in $\F_p[x]$ if and only if $a_0=0$.
\end{proof}

\begin{corollary}\label{all_cyclic_unramified}
    The set
    $$
    S = \left\{\frac{\gamma_j}{t^i}\colon j\in \{1,\ldots,r\},i \in \Z_{>0}, p\nmid i\right\}
    $$
    is a set of independent vectors in $\calV(\F_q(t))$ that generate, by Artin-Schreier theory, all the cyclic $p$-extensions of $\F_q(t)$ that are unramified outside the place $t=0$ and totally split at the place at infinity.
\end{corollary}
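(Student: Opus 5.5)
The corollary has two parts: the elements of $S$ generate the relevant extensions, and they are independent in $\calV(\F_q(t))$. I will deduce both from Lemma \ref{lem:cyclic_and_unramified} together with an explicit check of independence.

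\emph{Generation.} Let $L/\F_q(t)$ be a cyclic $p$-extension, unramified outside $t=0$ and totally split at infinity. By Lemma \ref{lem:cyclic_and_unramified}, $L=\F_q(t)(y)$ with $y^p-y=U(1/t)$, where $U=\sum_{i} a_iT^i$, $a_0=0$, and $a_i=0$ whenever $p\mid i$ and $i>0$. Expanding each coefficient in the basis, $a_i=\sum_j c_{ij}\gamma_j$ with $c_{ij}\in\F_p$, gives
$$U(1/t)=\sum_{i,j} c_{ij}\,\frac{\gamma_j}{t^i}.$$
This is an $\F_p$-combination of elements of $S$. By the Artin--Schreier dictionary recalled above, the character cutting out $L$ is therefore the corresponding $\F_p$-combination of the characters $\chi_{\gamma_j/t^i}$.

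Conversely, each element of $S$ lies in $\F_q[1/t]$ and has no constant term, so the ``if'' direction of Lemma \ref{lem:cyclic_and_unramified} applies to it. The same holds for any nonzero $\F_p$-combination of elements of $S$, provided the combination is not in $\U(\F_q(t))$. Hence every element of the span of $S$ that is nonzero in $\calV$ defines an extension of the required type.

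\emph{Independence.} This is the main point. Suppose $u=\sum c_{ij}\gamma_j t^{-i}$ with $c_{ij}\in\F_p$ is not all zero, and suppose $u=z^p-z$ for some $z\in\F_q(t)$. Since $\gamma_1,\dots,\gamma_r$ form a basis, some coefficient $a_i=\sum_j c_{ij}\gamma_j$ is nonzero; let $m$ be the largest such index, so $p\nmid m$. Then $u$ has a pole of exact order $m$ at the place $t=0$, coming from the term $a_m t^{-m}$.

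Now compare with $z^p-z$. If $v_0(z)\ge 0$, then $v_0(z^p-z)\ge 0$, which is impossible since $u$ has a pole. So $v_0(z)=-k<0$, and then $v_0(z^p-z)=-pk$, a pole of order divisible by $p$. This contradicts $p\nmid m$. Hence no nonzero combination of elements of $S$ lies in $\U(\F_q(t))$, and $S$ is independent in $\calV(\F_q(t))$.

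The whole argument is routine valuation bookkeeping. The only step that needs care is the observation that a nonzero combination produces a nonzero leading coefficient $a_m$, and this is exactly where the choice of $\F_p$-basis of $\F_q$ is used.
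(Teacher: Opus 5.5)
Your proof is correct and is the paper's argument written out in full. The paper only says that independence is an elementary verification and that the rest follows from Lemma \ref{lem:cyclic_and_unramified}; your pole-order comparison at $t=0$ is exactly that verification (a nonzero combination has a pole of order $m$ with $p\nmid m$, while $z^p-z$ is either regular there or has a pole of order divisible by $p$).

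One harmless caveat, inherited from the lemma: the place at infinity is totally split precisely when $\mathrm{Tr}_{\F_q/\F_p}(a_0)=0$, not when $a_0=0$. Such an $a_0$ lies in $\{w^p-w\colon w\in\F_q\}\subseteq\U(\F_q(t))$ and can be absorbed, so your normalization $a_0=0$ is still legitimate.
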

\begin{proof}
    The fact that the vectors in $S$ are linearly independent in $\calV(\F_q(t))$ is an elementary verification. The rest follows by Lemma \ref{lem:cyclic_and_unramified}.
\end{proof}

\subsection{The class field theory of \texorpdfstring{$\F_q(t)$}{}}
Let $\phi\colon \F_q[t]\to \F_q(t)\{\tau\}$ be the Carlitz module, i.e.\ the Drinfeld module defined by the assignment $t\mapsto t+\tau$. Recall that for a given $M\in \F_q[t]$, the $M$-torsion of $\phi$, denoted by $\Lambda_M$, is the kernel of the $\F_q$-linear operator $\phi_M$, viewed as the linear map $\overline{\F_q(t)}\to \overline{\F_q(t)}$ obtained by $\phi(M)$, interpreting $\tau$ as the Frobenius (see also \cite[Equation 1.1]{hayes}, where $\phi_M(u)$ is denoted $u^M$). Moreover, given the Drinfeld module $\widetilde \phi \colon \F_q[t]\to \F_q(t)\{\tau\}$ defined by $t\mapsto 1/t+\tau$, we denote by $\Lambda_{1/t^i}$ the $t^i$-torsion of $\widetilde \phi$.

In \cite{hayes}, Hayes gives a description of the maximal abelian extension of $\F_q(t)$ as the composition of the constant extension $\overline{\F_q}(t)$ together with extensions generated by such torsion points, as follows.

\begin{theorem}\label{thm:Hayes}
Let $F = \F_q(t)$ and define  $\Lambda_M$ as above.
\begin{enumerate}
    \item\label{item:Lambda_Pm} If $M = P^i$ with $P \in \F_q[t]$ an irreducible polynomial, the extension $F[\Lambda_M]/F$ is abelian, totally ramified at (the place over) $P$, tamely ramified at the place at infinity with inertia degree equal to $1$ and unramified at all the other places.
    \item \label{item:compositum_finite} If $M,N \in \F_q[t]$ are coprime, the extensions $F[\Lambda_M]/F$ and $F[\Lambda_N]/F$ are linearly disjoint and have compositum $F[\Lambda_{MN}]/F$.
    \item \label{item:Lambda_1/T} For every $i\ge 1$ the extension $F[\Lambda_{t^{-i}}]/F$ is totally ramified at infinity, tamely ramified at the place $t=0$ with inertia degree equal to $1$ and ramification index equal to $q-1$, and unramified at all the other places. 
    \item \label{item:F_i} For every $i\ge 1$ the extension $F[\Lambda_{t^{-i}}]/F$ has Galois group canonically isomorphic to  
    $$
    (\F_q[t^{-1}]/t^{-i})^\times \cong \F_q^\times \times (1 + t^{-1}\F_q[t^{-1}]/t^{-i}) \ ,
    $$
    and the extension $F_i \coloneqq F[\Lambda_{t^{-i}}]^{\F_q^\times}$ of $F$  is totally ramified at the place at infinity and unramified everywhere else.
    \item \label{item:compositum_disjoint} If $M$ is a polynomial then $F[\Lambda_M]/F$ is linearly disjoint from $F_i$ for all $i\ge 1$ and their compositum is linearly disjoint from $\F_{q^k}(t)$ for all $k$.
    \item \label{item:abelian} Every finite abelian extension of $\F_q(t)$ is contained in a compositum of $\F_{q^k}(t)$, $F_i$ and $F[\Lambda_M]$ for some $i,k,M$.
\end{enumerate}
\end{theorem}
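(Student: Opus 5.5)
This is Hayes' theorem \cite{hayes}, so the plan is to recall how each item follows from the theory of the Carlitz module, and to derive the items involving $\widetilde\phi$ by the change of variable $T = 1/t$. Items \eqref{item:Lambda_Pm} and \eqref{item:compositum_finite} are the classical cyclotomic theory of the Carlitz module.
\begin{itemize}
\item For $M=P^i$, $\Lambda_M$ is a free cyclic $\F_q[t]/M$-module of rank one, so $\gal(F[\Lambda_M]/F)\cong(\F_q[t]/M)^\times$.
\item A generator $\lambda$ has minimal polynomial $\phi_{P^i}(x)/\phi_{P^{i-1}}(x)$. This is Eisenstein at $P$, since its constant term is $P$ up to a unit, which gives total ramification at $P$.
\item The discriminant of $\phi_M(x)$ divides a power of $M$, because $\phi_M'(x)=M$. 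Hence no other finite place ramifies.
\item At $\infty$, the lattice description $\Lambda_M=\xi\,\pi\,M^{-1}\F_q[t]/\ldots$ shows that $\infty$ has ramification index $q-1$ and residue degree $1$. Concretely, adjoining a $(q-1)$-th root of $-t$ produces all the ramification; the inertia and decomposition group is $\F_q^\times$.
\end{itemize}
For coprime $M,N$ we have $\Lambda_{MN}=\Lambda_M\oplus\Lambda_N$, which gives the compositum statement. Linear disjointness follows from a degree count, $|(\F_q[t]/MN)^\times|=|(\F_q[t]/M)^\times|\cdot|(\F_q[t]/N)^\times|$, or alternatively from the disjoint sets of ramified finite places.

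For items \eqref{item:Lambda_1/T} and \eqref{item:F_i}, I will observe that $\widetilde\phi$ is exactly the Carlitz module for the polynomial ring $\F_q[T]$ with $T=1/t$, and that $F=\F_q(T)$. Then $F[\Lambda_{t^{-i}}]$ is the $T^i$-cyclotomic field, and I translate item \eqref{item:Lambda_Pm} through the dictionary: the place $T=0$ is $t=\infty$, and the place $T=\infty$ is $t=0$. This gives total ramification at $t=\infty$, tame ramification at $t=0$ with $e=q-1$ and $f=1$, and no other ramified places. The Galois group is $(\F_q[T]/T^i)^\times=\F_q^\times\times(1+T\F_q[T]/T^i)$. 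Since the inertia group at $T=\infty$ is precisely the factor $\F_q^\times$, the fixed field $F_i$ is unramified at $t=0$. It remains totally ramified at $t=\infty$, because total ramification passes to subextensions.

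For item \eqref{item:compositum_disjoint} I will argue by ramification.
\begin{itemize}
\item The intersection $F[\Lambda_M]\cap F_i$ is unramified at every finite place, since $F_i$ is. At $\infty$, its ramification index divides $q-1$ (from $F[\Lambda_M]$) and is a power of $p$ (from $F_i$, whose group is a $p$-group). So the intersection is unramified everywhere.
\item Because $F_i$ is totally ramified at $\infty$, any unramified subextension of it is trivial, so the intersection is $F$.
\item For the compositum $E$, I will work locally at $\infty$. There $E$ is the compositum of a tamely totally ramified extension of degree dividing $q-1$ and a totally ramified $p$-extension. The ramification index of $E$ at $\infty$ is divisible by both degrees, which are coprime, and $[E_\infty:F_\infty]$ is at most their product. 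So $E_\infty/F_\infty$ is totally ramified and has residue field $\F_q$.
\item Any intersection of $E$ with $\F_{q^k}(t)$ is then a constant field extension in which $\infty$ has residue degree $1$, and hence is trivial.
\end{itemize}

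Item \eqref{item:abelian} is the function-field Kronecker--Weber theorem, which I expect to be the main obstacle. I will compare norm groups in the idele class group $C_F$ of $F$. By class field theory, finite abelian extensions correspond to open finite-index subgroups of $C_F$, and $C_F\cong\Z\times\F_q[\![1/t]\!]^{(1),\times}\times\prod_P\O_P^\times$ modulo the appropriate identifications. The extensions $\F_{q^k}(t)$, $F_i$ and $F[\Lambda_M]$ correspond respectively to the subgroups cutting out the degree-$\Z$ quotient modulo $k$, the principal units at $\infty$ modulo level $i$, and the units at finite places modulo $M$. Any open subgroup of finite index contains the intersection of three such subgroups, and Artin reciprocity then gives the containment. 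The step requiring the most care is identifying the Artin map on $F[\Lambda_M]$ with the natural action of $(\F_q[t]/M)^\times$, including the sign normalisation at $\infty$; for this I will follow \cite{hayes}.
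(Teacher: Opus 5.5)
Your proposal is correct and follows essentially the same route as the paper, which likewise just reconstructs Hayes' results. For (1)--(2) both use the Carlitz cyclotomic theory: Eisenstein generators, $\phi_M'=M$, the period lattice at $\infty$, and the degree formula $[F[\Lambda_M]:F]=|(\F_q[t]/M)^\times|$. For (3)--(4) both use the substitution $T=1/t$, which turns $\widetilde\phi$ into the Carlitz module of $\F_q[T]$, and for (6) both rely on Hayes' class-field-theoretic description of the maximal abelian extension.

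The one difference is in (5). There the paper simply cites Hayes' Proposition 5.2, while you give a direct ramification argument, which is sound: coprime ramification indices at $\infty$, total ramification of $F_i$ at $\infty$, and residue degree $1$ at $\infty$ ruling out constant field extensions.
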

For the reader's convenience, we give precise references to \cite{hayes}.
    
    For $(1)$: abelianity is in Theorem 2.1; the part about non-ramification of most primes is in Proposition 2.2; the total ramification of $P$ is Proposition 2.2 together with 2.3 which gives the degree of the extension; the ramification of the prime at infinity is in Theorems 3.1, 3.2.

    For $(2)$: the geometric fact $\Lambda_{MN} = \Lambda_M \times \Lambda_N$ gives the compositum of the two extensions; linear disjointness comes from the computations of [$F[\Lambda_M]:F]$ in Proposition 2.3.  

    $(3)$ is symmetric to the case $P = t$ in $(1)$; the statement about the inertia degree at $0$ is analogous to Theorem 3.2.  

    $(4)$ is a consequence of $(3)$.

    $(5)$ is a finite version of Proposition 5.2

    $(6)$ is a finite version of the first part of Proposition 7.1

\begin{corollary}\label{cor:all_pexts_are_where_we_like}
    If $L$ is an abelian  extension of $F = \F_q(t)$ which is only ramified at one place $P$ and totally split at infinity, then $L$ is contained in $F[\Lambda_{P^j}]$ for some $j$.
\end{corollary}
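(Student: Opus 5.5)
We argue through Hayes' description of the abelian extensions of $F=\F_q(t)$, Theorem \ref{thm:Hayes}. By Theorem \ref{thm:Hayes}\eqref{item:abelian}, $L$ lies in a compositum $E=\F_{q^k}(t)\cdot F_i\cdot F[\Lambda_M]$ for some $k$, $i$ and polynomial $M$. Enlarging $M$ to a multiple does not hurt, by Theorem \ref{thm:Hayes}\eqref{item:compositum_finite}. So we may write $M=P^j N$ with $N$ coprime to $P$; if $P$ is the place at infinity instead, the argument below is symmetric, using the $F_i$ in place of $F[\Lambda_{P^j}]$. By Theorem \ref{thm:Hayes}\eqref{item:compositum_finite} and \eqref{item:compositum_disjoint}, $E$ is the compositum of the linearly disjoint fields $\F_{q^k}(t)$, $F_i$, $F[\Lambda_N]$ and $F[\Lambda_{P^j}]$. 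Hence
$$\gal(E/F)=\gal(\F_{q^k}(t)/F)\times\gal(F_i/F)\times\gal(F[\Lambda_N]/F)\times\gal(F[\Lambda_{P^j}]/F).$$

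The aim is to show that $H=\gal(E/L)$ contains the first three factors. The idea is to use inertia and decomposition groups, which in an abelian extension are well defined subgroups of $\gal(E/F)$.

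\emph{The factor $\gal(F[\Lambda_N]/F)$.} For each prime $Q\mid N$, write $I_Q\subseteq\gal(E/F)$ for the inertia group at $Q$. By Theorem \ref{thm:Hayes}\eqref{item:Lambda_Pm}, $F[\Lambda_{Q^e}]$ is totally ramified at $Q$, while the other factors are unramified at $Q$. Therefore $I_Q$ contains, and projects isomorphically onto, the factor $\gal(F[\Lambda_{Q^e}]/F)$. It sits inside that factor times the inertia of the remaining factors, which is trivial at $Q$. Since $L$ is unramified at $Q$, we get $I_Q\subseteq H$. Taking all $Q\mid N$ gives $\gal(F[\Lambda_N]/F)\subseteq H$.

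\emph{The factor $\gal(F_i/F)$ and the constants.} These are detected at the place at infinity. That place is totally ramified in $F_i$ by Theorem \ref{thm:Hayes}\eqref{item:F_i}, unramified in $\F_{q^k}(t)$ and in $F[\Lambda_N]$, and tamely ramified with inertia degree $1$ in $F[\Lambda_{P^j}]$. Because $L$ is totally split at infinity, the whole decomposition group $D_\infty$ lies in $H$. I would then compute $D_\infty$ in each factor.
\begin{itemize}
\item The inertia group at infinity surjects onto $\gal(F_i/F)$.
\item A Frobenius element at infinity acts as a generator of $\gal(\F_{q^k}(t)/F)$, since the place at infinity has degree $1$.
\item By Theorem \ref{thm:Hayes}\eqref{item:Lambda_Pm}, the decomposition group of infinity in $F[\Lambda_{P^j}]$ is small: it is the image of $\F_q^\times$, of order $q-1$. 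Indeed infinity is split into places of residue degree $1$, and the inertia is tame of order dividing $q-1$.
\end{itemize}
So $H$ contains elements of the form $(\sigma,\rho,\ast,\epsilon)$, with $\sigma$ generating the constant factor, $\rho$ ranging over $\gal(F_i/F)$, and $\epsilon$ ranging over the image of $\F_q^\times$.

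The step I expect to be the main obstacle is this: these elements need not lie in a single factor, so one must show that $L$ still sits inside $F[\Lambda_{P^j}]$ up to this twisted $\F_q^\times$ part. My first attempt would be a degree and ramification count. Consider the fixed field $L'$ of $H$ together with the factor $\gal(F[\Lambda_{P^j}]/F)$. Then $L'$ is unramified everywhere except possibly at infinity, where it is totally split. An abelian extension unramified everywhere and split at a degree-one place must be trivial: in $\F_q(t)$ the class number is $1$, so such an extension is a constant extension, and a constant extension split at a degree-one place is trivial.

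This shows that $H$ together with $\gal(F[\Lambda_{P^j}]/F)$ generates all of $\gal(E/F)$. It does not by itself give the stronger containment needed. For that, I would replace $F[\Lambda_{P^j}]$ by the compositum $F[\Lambda_{P^j}]F_i\F_{q^k}(t)$ and use the diagonal $\F_q^\times$ description of $D_\infty$. The goal is to show that $H$ contains a complement to $\gal(E/F[\Lambda_{P^j}])$. If this gets messy, the fallback is Artin reciprocity directly: compute the idèle class group of $F$ modulo the norm group of $L$. Since $L$ is unramified outside $P$ and $F^\times_\infty$ lies in the norm group, the quotient is a quotient of $(\F_q[t]/P^j)^\times$, which is exactly $\gal(F[\Lambda_{P^j}]/F)$. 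This gives $L\subseteq F[\Lambda_{P^j}]$.
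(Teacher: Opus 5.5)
\textbf{Verdict: there is a genuine gap, at the place at infinity.} Your treatment of the primes $Q\mid N$ is correct; it is the paper's first step, phrased with inertia groups instead of minimality of $M$. What is missing is where the real content lies: you never show that $H$ contains the factors $\gal(F_i/F)$ and $\gal(\F_{q^k}(t)/F)$.

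\begin{enumerate}
\item Your field $L'$ only yields $L\cap \F_{q^k}(t)F_iF[\Lambda_N]=F$. As you note yourself, this is weaker than the claim.
\item The announced target ``$H$ contains a complement to $\gal(E/F[\Lambda_{P^j}])$'' is the wrong one. What is needed is $H\supseteq \gal(E/F[\Lambda_{P^j}])$ itself.
\item There is a factual slip: by Theorem \ref{thm:Hayes}\eqref{item:Lambda_Pm}, $F[\Lambda_N]$ is tamely ramified at infinity, not unramified. This is harmless only because you may first replace $E$ by $E''=\F_{q^k}(t)\cdot F_i\cdot F[\Lambda_{P^j}]$, which already contains $L$.
\item The Artin reciprocity fallback does not close the gap as written. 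The quotient of the id\`{e}le class group by $F^\times F_\infty^\times\prod_{v\neq P,\infty}\O_v^\times U_P^{(j)}$ is $(\F_q[t]/P^j)^\times/\F_q^\times$. Knowing that $\gal(L/F)$ is a quotient of a group abstractly isomorphic to $\gal(F[\Lambda_{P^j}]/F)$ says nothing about $L\subseteq F[\Lambda_{P^j}]$. You need the norm group of $F[\Lambda_{P^j}]$ to be contained in that of $L$, which is Hayes' identification of $F[\Lambda_{P^j}]$ as an explicit ray class field. With that input the fallback is a valid, different proof, but that fact is not among the statements of Theorem \ref{thm:Hayes} from which the corollary is meant to follow.
\end{enumerate}

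\textbf{The missing idea is coprimality.} Work in $\gal(E''/F)=\gal(\F_{q^k}(t)/F)\times\gal(F_i/F)\times\gal(F[\Lambda_{P^j}]/F)$.
\begin{enumerate}
\item The inertia group $I_\infty$ has trivial first component. It surjects onto $\gal(F_i/F)$, which is a $p$-group by \eqref{item:F_i}. It also surjects onto the inertia group $T$ of infinity in $F[\Lambda_{P^j}]$, whose order is prime to $p$ because the ramification is tame.
\item Hence $I_\infty=\{1\}\times\gal(F_i/F)\times T$ is a genuine product. Since $L$ is unramified at infinity, $\gal(F_i/F)$ and $T$ lie in $H$ separately.
\item Infinity has residue degree $1$ in $F[\Lambda_{P^j}]$, so its decomposition group there equals $T$. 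Take an element $(\sigma,\rho,\epsilon)\in D_\infty\subseteq H$ with $\sigma$ generating the constant factor; then $\epsilon\in T$.
\item Multiplying by $(1,\rho,\epsilon)^{-1}\in H$ gives $(\sigma,1,1)\in H$. Therefore $H\supseteq \gal(E''/F[\Lambda_{P^j}])$, i.e.\ $L\subseteq F[\Lambda_{P^j}]$.
\end{enumerate}

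For comparison, the paper handles $F_i$ by a ``similar argument'' to the finite primes. That argument implicitly needs the same coprimality, since $F[\Lambda_{P^j}]$ is itself ramified at infinity. It handles the constants differently. A degree-one place $v$ of $F[\Lambda_{P^j}]$ above infinity is inert in $\F_{q^k}(t)\cdot F[\Lambda_{P^j}]$. So its decomposition group contains $\gal(\F_{q^k}(t)F[\Lambda_{P^j}]/F[\Lambda_{P^j}])$, and the decomposition field, which contains $L$, lies inside $F[\Lambda_{P^j}]$.
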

\begin{proof}
    We refer to Theorem \ref{thm:Hayes} without mentioning. 
    By $(6)$, $L$ is contained in a compositum $\F_{q^k}(T) \cdot F[\Lambda_M] \cdot F_i$ for some $i$. We first prove that, if we take $M$ minimal with respect to divisibility, then $M = P^j$ for some $j$. Indeed, if $Q$ is another prime dividing $M$ and we write $M = Q^e N$ with $N$ coprime to $Q$, then by $(2)$ and $(5)$ the extension $F[\Lambda_{Q^e}]$ is linearly disjoint from $\F_{q^k}(T) \cdot F[\Lambda_{N}] \cdot F_i$. Since, by $(1)$, $\F_{q^k}(T) \cdot F[\Lambda_{N}] \cdot F_i$ is the maximal subextension of $\F_{q^k}(T) \cdot F[\Lambda_{M}] \cdot F_i$ not ramified at $Q$, it follows that $L\subseteq \F_{q^k}(T) \cdot F[\Lambda_{N}] \cdot F_i$, contradicting the minimality of $M$. 
    By a similar argument, since the prime at infinity is not ramified in $L$ then $L$ is contained in $\F_{q^k}(T) \cdot F[\Lambda_{P^j}]$.

    Finally, fix a place $v$ of $F[\Lambda_{P^j}]$ above infinity. By $(1)$, the residue field of $v$ is $\F_q$, and since and $\F_{q^k}(T) \cdot F[\Lambda_{P^j}]$ contains $\F_{q^k}$, then $v$ is inert in $\F_{q^k}(T) \cdot F[\Lambda_{P^j}]$, i.e.\ it has a unique extension $w$. In particular the decomposition group of  $w$ contains $\gal(\F_{q^k}(t) \cdot F[\Lambda_{P^j}]/F[\Lambda_{P^j}]) $, i.e.\ the fixed field of such a decomposition group is contained in $F[\Lambda_{P^j}]$. By abelianity the same is true for all the other places above infinity; on the other hand the fixed field of such a decomposition group is the largest subfield where the place at infinity is totally split (see e.g.\ \cite[Theorem 3.8.3]{stichtenoth}). In particular $L$ is contained in it, hence in $F[\Lambda_{P^j}]$.

\end{proof}

\section{Dynamical Galois groups of affine Carlitz polynomials}\label{sec:generic}

Let $K$ be an $\F_q$-field. Let 
$$\fgen \coloneqq x^q+tx+s  \quad \in K(t,s)[x]\ . $$ We are now going to describe the dynamical Galois group $G_\infty(\fgen,0)$. 

Such $\fgen$ is a generic (or universal) case of the polynomials of the form 
$$f = f_{t_0,s_0} = x^q+t_0x + s_0 \quad \in k[x]$$
for $k$ an extension of $K$ (hence of $\F_q$) and $t_0 \neq 0$. Notice that any such $f$ is $\F_q$-affine, since we can write 
$$f(x) = L(x) + s_0 \quad \text{ with } \quad L(x) = L_{t_0}(x) = x^q+t_0x\ , $$ 
and $L$  defines an $\F_q$-linear function on $\overline k$.
In particular the iterates of $f$ are 
\begin{equation}\label{n-th iterate}
		f^n=L^n+f^{n-1}(s_0),
	\end{equation}
and since $L^n$ is again linear, if $\alpha'$ is any root of $f^n$, then the set of roots of $f^n$ is $\{\alpha' +\beta : \beta \in \ker(L^n) \}$.

The polynomial $L^n$ is separable (since $t_0 \neq 0$, then $L$ and all its iterates have no finite critical points over $\overline k$) of degree $q^n$. Moreover we can define a canonical embedding 
\begin{equation}\label{eq:R_nqstar_emb} 
    G_n(L,0) = \gal(L^n/k)  \longrightarrow \left(\tfrac{\F_q[y]}{(y^n)} \right)^\times \,, \quad \sigma \longmapsto F_\sigma(y)
\end{equation}
defined by sending each automorphism $\sigma$ to the (unique and invertible modulo~$y^n$) polynomial $F_\sigma$ such that 
\begin{equation}\label{eq:R_nqstar_emb2} 
\sigma(\beta) = F_\sigma(L)(\beta) \quad \text{for all roots $\beta$ of $L^n$}\ .
\end{equation}
This works because the set of roots of $L^n$, which we denote $\ker(L^n)$, is naturally an $\F_q[y]$-module, with $y$ acting as $L$: if $L^n(\beta)=0$, then also $L^n(L(\beta))=0$ and more generally for all polynomials $a_0+ \ldots + a_d y^d \in \F_q[y]$, also $a_0\beta + \ldots + a_dL^d(\beta)$ is a root of $L^n$. Moreover $y^n $ acts as zero and we see that  $\ker(L^n)$ is a cyclic $\F_q[y]/(y^n)$-module: indeed if we inductively choose 
\begin{equation}\label{eq:beta_i}
\beta_1 \neq 0 \in \ker(L) \,, \quad \beta_{i+1} \in \ker(L^i) \text{ such that }L(\beta_{i+1}) = \beta_i\ ,
\end{equation}
then $\beta_n$ is a generator of $\ker(L^n)$ in the sense that every $\beta\in \ker(L^n)$ can be  written as $F(\beta_n)$ with $F$ a unique polynomial in $\F_q[y]/(y^n)$.
In particular we can define $F_\sigma$ in \eqref{eq:R_nqstar_emb} as the unique polynomial that satisfies $\sigma(\beta_n) = F_\sigma(L)(\beta_n)$ and then check that \eqref{eq:R_nqstar_emb2}  holds for all $\beta$'s; since $\sigma$ acts as a permutation, then $F_\sigma$ is invertible.

Since $L^n$ is separable and $f^n= L^n + f^{n-1}(s_0)$ is also spearable and if we choose a root $\alpha_n$ of $f^n$, together with $\beta_n$ as in \eqref{eq:beta_i}, we get a map 
\begin{equation}\label{eq:Gnf0_emb} 
    \iota_n \colon G_n(f,0) = \gal(g^n/k)  \longrightarrow \left(\tfrac{\F_q[y]}{(y^n)} \right) \rtimes \left(\tfrac{\F_q[y]}{(y^n)} \right)^\times \ , \quad \sigma \longmapsto (G_\sigma, F_\sigma) \ ,
\end{equation}
where, in the semidirect product, the left is the additive group of the ring $\tfrac{\F_q[y]}{(y^n)}$ and the group of units on the right naturally acts on it by multiplication.   
Remembering that all the roots of $f^n$ are of the form $\alpha_n + \beta$, for $\beta = F(\beta_n) \in \ker(L^n)$, we can define the map \eqref{eq:Gnf0_emb} by imposing that $G_\sigma,F_\sigma$ are the unique polynomials modulo~$y^n$ such that 
\[
\sigma(\alpha_n) = \alpha_n + G_\sigma(L)(\beta_n) \quad \text{and} \quad \sigma(\beta) = F_\sigma(L)(\beta) \ , \forall \beta \in \ker(L^n)\ .
\]
Such $G_\sigma, F_\sigma$ make $\iota_n$ a morphism of groups which is well-defined and injective: every $\beta\in \ker(L^n)$ is a difference of roots of $f^n$, and hence it lies in the splitting field of $f^n$. Together with $\alpha_n$ all such $\beta$'s  generate such splitting field.

From now on, for the sake of brevity, we denote
\[
R_n \coloneqq\tfrac{\F_q[y]}{(y^n)}  \ , \quad (R_n)^\times \coloneqq \left(\tfrac{\F_q[y]}{(y^n)} \right)^\times \ .
\]
Notice that if we choose the $\alpha_n$'s consistently, i.e.\ we impose $f(\alpha_{n+1}) = \alpha_n$ similarly to what we do for the $\beta_n$'s, then the maps $\iota_n$ can be packed together to define the map $\iota$ in \eqref{eq:def_iota}:
\begin{equation}\label{eq:pedantic_iota}
    \varprojlim_n \iota_n  \eqqcolon \iota \,\colon\,  
    G_\infty(f,0) \, \longrightarrow \,\varprojlim \left( R_n \rtimes (R_n)^\times \right) = \F_q[\![ y ]\!] \rtimes \F_q[\![ y ]\!] ^\times.
\end{equation}

Then, for the universal $f$, we have the following lemma, which is a  straightforward generalization of \cite[Theorem 5.3]{juul}.

\begin{lemma}\label{gal_gp_structure}
	Let $K$ be an $\F_q$-field and let $\fgen\coloneqq x^q+tx+s\in K(t,s)[x]$. 
    Let $n\ge 1$ and let $\alpha_n$ be a root of $f^n$. Then, denoting $L_t = x^q+tx$, we have that
    $$K(t,s)_n(f,0)=K(t,s)_n(L_t,0) \cdot K(t,s)(\alpha_n).$$ 
    Moreover, the map $\iota_n$ is an isomorphism
	$$G_n(f,0)\cong R_n\rtimes (R_n)^\times.$$
\end{lemma}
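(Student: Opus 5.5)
The first claim is immediate from the discussion preceding the lemma. The roots of $\fgen^n$ are exactly $\alpha_n+\beta$ with $\beta\in\ker(L_t^n)$. So the splitting field of $\fgen^n$ contains every $\beta=(\alpha_n+\beta)-\alpha_n$, and it also contains $\alpha_n$. Conversely, $K(t,s)_n(L_t,0)\cdot K(t,s)(\alpha_n)$ contains all the roots $\alpha_n+\beta$. Since $\iota_n$ is injective, the isomorphism reduces to the counting statement
$$[K(t,s)_n(\fgen,0):K(t,s)]\ \ge\ |R_n|\cdot|R_n^\times| = q^n\cdot (q-1)q^{n-1}.$$
I would split this as
$$[K(t,s)_n(L_t,0):K(t,s)]\ \ge\ (q-1)q^{n-1},\qquad [K(t,s)_n(L_t,0)(\alpha_n):K(t,s)_n(L_t,0)]= q^n .$$

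For the second degree I use the variable $s$. The field $E:=K(t)_n(L_t,0)$ does not involve $s$, and $K(t,s)_n(L_t,0)=E(s)$. By \eqref{n-th iterate}, $\alpha_n$ satisfies $L_t^n(\alpha_n)=-\fgen^{n-1}(s)$. Here $\fgen^{n-1}(s)=L_t^{n-1}(s)+\fgen^{n-2}(s)=\dots$ is a polynomial in $s$ with coefficients in $K(t)$, and by linearity it equals $M(s)$ for the $\F_q$-linear polynomial $M=L_t^{n-1}+\dots+L_t+\mathrm{id}$, whose linear coefficient is $1+t+\dots+t^{n-1}$. Since $E(\alpha_n)\ni s=-M^{-1}$-expression is awkward, I argue by generic specialisation instead: $\alpha_n$ is transcendental over $E$, and $s$ is a polynomial in $\alpha_n$, namely $s=$ the unique solution determined by $\fgen^n(\alpha_n)=0$. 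More simply, $\fgen^n(x)=L_t^n(x)+M(s)$ is linear of degree $1$ in $s$ up to the additive polynomial $M$. It is cleaner to change variables: $E(s)=E(u)$ with $u=M(s)$ purely transcendental, since $s\mapsto M(s)$ is a finite separable extension $E(s)/E(u)$ of degree $q^{n-1}$. Hence I should compute $[E(s)(\alpha_n):E(s)]$ directly. The polynomial $x^{q^n}+\dots+t^n x+M(s)$ is irreducible over $E(s)$ by Eisenstein at the prime $s$ of $E[s]$: $M(s)$ has $s$ as a simple factor because its linear coefficient $\sum t^i\ne0$, and all other coefficients lie in $E$. By Gauss, it is then irreducible over $E(s)$. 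This gives degree $q^n$.

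For the first degree I work over $K(t)$; adjoining the transcendental $s$ does not change degrees. I need $\gal(K(t)(\ker L_t^n)/K(t))\to R_n^\times$ to be onto. There are two routes:
\begin{itemize}
\item Reduce to $K=\F_q$, or better to the algebraic closure of $\F_q$ in $K$, via a specialisation/regularity argument. Then identify $\ker(L_t^n)$ with Carlitz-type torsion. Indeed $L_t=\tau+t$ is the Carlitz module $\phi_t$ with $y\leftrightarrow t$, so $\ker L_t^n=\Lambda_{t^n}$. Theorem \ref{thm:Hayes}(1),(4) gives $\gal(F[\Lambda_{t^n}]/F)\cong(\F_q[t]/t^n)^\times$, which matches $R_n^\times$ via $y\mapsto t$.
\item Argue directly, which is what I would actually do. The polynomial $L_t^n(x)/x$ has constant term $t^n$, and its Newton polygon at $t=0$ has slopes giving ramification index $(q-1)q^{n-1}$: a root $\beta_n$ has $v_t(\beta_n)=1/((q-1)q^{n-1})$, computed inductively from $\beta_{i+1}^q+t\beta_{i+1}=\beta_i$ with $v(\beta_1)=1/(q-1)$. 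This totally ramified argument works over any $K(t)$, including constant field extensions, because the ramification index is geometric.
\end{itemize}

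So $[K(t)(\beta_n):K(t)]\ge (q-1)q^{n-1}=|R_n^\times|$, and injectivity forces equality and surjectivity. The main obstacle is this valuation computation, together with ensuring the two degree counts multiply, which the tower argument above handles. Combining the two degrees gives $|G_n|=|R_n\rtimes R_n^\times|$, so $\iota_n$ is an isomorphism.
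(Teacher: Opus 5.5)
Your derivation of the first claim is fine, and so is your lower bound $[K(t,s)_n(L_t,0):K(t,s)]\ge (q-1)q^{n-1}$. The Newton-polygon computation $v_t(\beta_n)=1/((q-1)q^{n-1})$ is a correct, self-contained substitute for the paper's citation of Hayes, and it works for any $\F_q$-field $K$.

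\textbf{The gap is in the second degree.} The polynomial $\tilde f^n(x)=L_t^n(x)+M(s)$ is \emph{not} Eisenstein at the prime $s$ of $E[s]$. Eisenstein requires every non-leading coefficient to be divisible by $s$. But the coefficients of $x^{q^i}$, $0\le i<n$, in $L_t^n$ (e.g.\ $t^n$ in front of $x$) are nonzero elements of $E$, hence units of $E[s]$. That they ``lie in $E$'' is exactly what breaks the criterion.

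In fact no argument at the place $s=0$ alone can work. Modulo $s$ the polynomial becomes $L_t^n(x)$, which is separable and splits into linear factors over $E\supseteq\ker L_t^n$. By Hensel's lemma, the place $s=0$ of $E(s)$ therefore splits completely in $E(s)(\alpha_n)$, whereas an Eisenstein polynomial would make it totally ramified. Separately, your intermediate remarks that $s$ is a polynomial in $\alpha_n$ and that $E(s)=E(u)$ are false and should be deleted.

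\textbf{The missing idea} is to first kill the middle coefficients by reducing modulo $t$, which is what the paper does. Since $L_t\equiv x^q \bmod t$, all non-leading coefficients of $L_t^n$ are divisible by $t$. Take a place $v$ of $E$ above $t=0$, with valuation ring $\mathcal O_v$, uniformizer $\pi$ and residue field $k_v$. Then $\tilde f^n\in\mathcal O_v[s][x]$, and its reduction modulo $\pi$ is $x^{q^n}+s^{q^{n-1}}+\dots+s^q+s\in k_v[s][x]$. This reduction \emph{is} Eisenstein at $s$, since the linear coefficient $1+t+\dots+t^{n-1}$ of $M$ reduces to $1$.

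Since $\tilde f^n$ is monic, any nontrivial factorization in $\mathcal O_v[s][x]$ would reduce to a nontrivial factorization modulo $\pi$. So $\tilde f^n$ is irreducible in $\mathcal O_v[s][x]$. Gauss's lemma over the UFD $\mathcal O_v[s]$ then gives irreducibility over its fraction field $E(s)$. With this step repaired, your degree count $q^n\cdot(q-1)q^{n-1}$ goes through.
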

\begin{proof}
	To ease the notation, from now on we let $K_n\coloneqq K(t,s)_n(f,0)$ and $F_n\coloneqq K(t,s)_n(L_t,0)$.
	
    The first claim is a direct consequence of \eqref{n-th iterate} and the observation below. Since the map $\iota_n$ is an embedding, in order to complete the proof we only need to show that $\deg (K_n/F_n) = \# R_n = q^n$ and that $\deg(F_n/K(t,s)) = \# (R_n)^\times$. The second equality follows from the theory of Drinfeld modules, see e.g. \cite[Theorem 2.3]{hayes}: we know that $\gal(L_t^n/k(t))\cong (R_n)^\times$ for any field $k$ of characteristic $p$. Hence this holds in particular for $k=K(s)$.
	
	To prove the first equality, we show that $f^n$ is irreducible over $F_n$. Let $v$ be a place of $K(t)_n(L_t,0)$ above the place corresponding to $t$ in $K(t)$. Consider the valuation ring $\mathcal O_v$ in $K_n(L_t,0)$, that is a local ring. Then $f^n\in \mathcal O_v[s][x]$, as it is clear from \eqref{n-th iterate}. Let $\pi$ be a uniformizer at $v$. The ideal generated by $\pi$ in $\mathcal O_v[s]$ is clearly prime; on the other hand the reduction of $f^n$ modulo $\pi$, that is an element of $k_v[s][x]$ where $k_v$ is the residue field at $v$, is Eisenstein at the prime $s$, and is therefore irreducible. It follows that $f^n$ is irreducible in $\O_v[s][x]$, and since $\O_v[s]$ is a UFD and $f^n$ is clearly primitive, by Gauss' Lemma it follows that $f^n$ is irreducible in $F_n[x]$.
\end{proof}

We now consider the subgroup
$$(R_n)_1^\times\coloneqq \left\{1+\sum_{i=1}^{n-1}a_iy^i\colon a_1,\ldots,a_{n-1}\in \F_q\right\}\le (R_n)^\times.$$

\begin{lemma}   \label{lemma:maximal}
    Denote $H \coloneqq (R_n)_1^\times$.
    We naturally have 
    $$
    \left( R_n\rtimes H \right)^{ab} \cong (R_n/yR_n) \times H 
    \cong  \F_q \times H \ ,
    $$
    and the quotient map from $R_n\rtimes H$ to its abelianization $(R_n/yR_n) \times H$ sends $(\lambda,\mu)$ to $(\lambda \pmod y, \mu)$. Moreover,  all maximal subgroups of $R_n\rtimes H$ are normal, with cyclic quotient of order $p$ and they are the preimages of the maximal subgroups of $(R_n/yR_n) \times (H/H^p) \cong \F_q \times (H/H^p)$.
\end{lemma}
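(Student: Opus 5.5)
Let $G \coloneqq R_n\rtimes H$, with multiplication $(\lambda,\mu)(\lambda',\mu') = (\lambda+\mu\lambda',\mu\mu')$. Since $H$ is abelian, $G$ is a pro-$p$ (in fact finite $p$-) group, being an extension of the $p$-group $H$ by the $p$-group $R_n$. The plan has three steps: compute the commutator subgroup $[G,G]$ directly, deduce the abelianization, and then use the standard Frattini-subgroup theory of finite $p$-groups to get the statement about maximal subgroups.

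\textbf{Step 1: commutators.} For $\lambda\in R_n$ and $\mu\in H$,
\[
(\lambda,1)(0,\mu)(\lambda,1)^{-1}(0,\mu)^{-1} = ((1-\mu)\lambda,1).
\]
Since $1-\mu$ ranges over $yR_n$ (for instance $\mu = 1-y$, which lies in $H$ when $n\ge 2$) and $\lambda$ ranges over $R_n$, all of $yR_n\times\{1\}$ lies in $[G,G]$.

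Conversely, the map $G\to (R_n/yR_n)\times H$, $(\lambda,\mu)\mapsto(\lambda\bmod y,\mu)$, is a homomorphism. Indeed, $\mu\lambda'\equiv\lambda' \pmod y$ because $\mu\equiv 1 \pmod y$. Its target is abelian and its kernel is exactly $yR_n\times\{1\}$. Hence $[G,G]=yR_n\times\{1\}$, and $G^{ab}\cong \F_q\times H$ via the stated map. The degenerate case $n=1$, where $H$ is trivial and $yR_n=0$, is immediate.

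\textbf{Step 2: maximal subgroups.} In a finite $p$-group every maximal subgroup is normal of index $p$. This follows because $p$-groups are nilpotent, so a maximal subgroup $M$ satisfies $M\subsetneq N_G(M)$, which forces $M$ to be normal; then $G/M$ has no nontrivial proper subgroups, so it is cyclic of order $p$.

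It follows that each maximal subgroup contains the Frattini subgroup $\Phi(G)=G^p[G,G]$. So maximal subgroups of $G$ correspond bijectively to maximal subgroups of $G/\Phi(G)$, which is the maximal elementary abelian quotient of $G^{ab}$. By Step 1 this quotient is $(\F_q\times H)/(\F_q\times H)^p = \F_q\times H/H^p$, since $\F_q$ already has exponent $p$. Therefore the maximal subgroups of $G$ are exactly the preimages of the maximal subgroups of $\F_q\times H/H^p$ under the composite quotient map.

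\textbf{Main obstacle.} The only step requiring care is the reverse inclusion in Step 1, namely checking that the reduction map is a homomorphism. This relies on $H\subseteq 1+yR_n$; the restriction to $H$ rather than all of $(R_n)^\times$ is essential here. The remaining arguments are standard $p$-group facts and need only to be cited.
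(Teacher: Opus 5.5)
Your proof is correct and follows essentially the same route as the paper: you identify $[G,G]=yR_n\times\{1\}$ through the commutators $((1-\mu)\lambda,1)$, then invoke the standard fact that maximal subgroups of a finite $p$-group are normal of index $p$, hence contain $G^p[G,G]$. The only difference is cosmetic: you check $[G,G]\subseteq yR_n\times\{1\}$ explicitly through the reduction homomorphism, while the paper quotes the general description of commutators in a semidirect product of abelian groups.
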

\begin{proof}
The first equality holds because the group of commutators in a semidirect product $G \rtimes_\phi H$ with $G,H$ abelian is the subgroup generated by all elements of the form $(g^{-1} \phi_h(g), 1)$ for $g \in G, h\in H$.

In our situation $g = \lambda(y)$ is a polynomial modulo $y^n$ and $h$ is any  polynomial modulo $y^n$ of the form $1 + y\mu(y)$. Hence 
$$g^{-1} \phi_h(g) = (1+y\mu(y)) \cdot \lambda(y)- \lambda(y) = y\mu(y)\lambda(y) \ ,$$
and we conclude immediately that the set of such commutators is exactly $yR_n$. 
Of course $R_n/yR_n \stackrel{\sim}{\to} \F_q$ simply by mapping $\lambda$ to $\lambda(0)$.
This proves the first part of the statement.

Now, $R_n \times H$ is a $p$-group, hence it is nilpotent and by \cite[Theorem 1.26]{Isaacs} its maximal subgroups are normal and have index $p$. In particular maximal subgroups are preimages of maximal subgroup of $A = (R_n \times H)^{ab}$, and actually of $A/A^p$, which is naturally $(R_n/yR_n) \times (H/H^p)$ by the first part of the statement. 
\end{proof}

We keep $K$ to be an $\F_q$-field and $\fgen \coloneqq x^q+tx+s$. Denote $K_\infty \coloneqq K(t,s)_\infty(\fgen,0)$ and let $\theta \in K_\infty$ be a $(q-1)$-th root of $-t$. 
For every $i\in \Z_{\ge 1}$ with $p\nmid i$ and every $j\in \{1,\ldots,r\}$, let $\chi_{i,j}$ be the continuous character $\gal\left( K(t,s)^{\text{sep}}/K(t,s)(\theta) \right) \to \F_p$ that cuts out the extension of $K(t,s)(\theta)$ generated by a root of $x^p-x-\frac{\gamma_j}{t^i}$. Finally, for every $\ell\in \{1,\ldots,r\}$ let $\psi_\ell$ be the continuous character $\gal\left( K(t,s)^{\text{sep}} /K(t,s)(\theta) \right) \to \F_p$ that cuts out the extension of $K(t,s)(\theta)$ generated by a root of $x^p-x-\frac{\gamma_\ell s}{\theta^q}$. For each of the above characters, denote by $\widetilde{\cdot}$ its restriction to $\gal(K_\infty/K(t,s)(\theta))$.

\begin{lemma}\label{theorem:p-extension_theta}
With the notation as above, the map $\iota$ in \eqref{eq:pedantic_iota} gives an isomorphism of pro-$p$-groups
\begin{equation}\label{eq:GenGal=Phi1}
\gal\left( K_\infty /K(t,s)(\theta) \right) \cong \Phi_{\infty,1}  = \varprojlim_n \left( (R_n^q) \times  (R_n^q)_1^\times \right).   
\end{equation}
Moreover, the sequence
$$(\widetilde{\psi}_1,\ldots,\widetilde{\psi}_r,\widetilde{\chi}_{i,j},\ldots\colon j\in \{1,\ldots,r\}, i\in \Z_{\ge 1}, p\nmid i)$$
is a basis of the $\F_p$-space of continuous characters $\gal\left( K_\infty /K(t,s)(\theta) \right) \to \F_p$.

In other words each minimal subfield of $K_\infty /K(t,s)(\theta)$  is generated by an element with minimal polynomial $x^p-x-u$ with $u$ an $\F_p$-combination, unique up to $\F_p^\times$ scalars, of the elements $\frac{\gamma_j}{t^{i}}$ and $\frac{\gamma_\ell  s}{\theta^q}$.
\end{lemma}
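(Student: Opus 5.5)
Our plan is to prove the two parts in order: first the isomorphism \eqref{eq:GenGal=Phi1}, then the basis of characters.

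\textbf{Step 1: the isomorphism.} By Lemma \ref{gal_gp_structure}, $\iota$ identifies $\gal(K_\infty/K(t,s))$ with $\Phi_\infty$. In particular the subfield $F_1 = K(t,s)(\ker L_t)$ has Galois group $(R_1)^\times=\F_q^\times$. Since $\ker L_t=\{0\}\cup\{\beta: \beta^{q-1}=-t\}$, we have $F_1 = K(t,s)(\theta)$. The subgroup of $\gal(K_\infty/K(t,s))$ fixing $\theta$ consists of those $\sigma$ acting trivially on $\ker L = \ker(y)$. These are exactly the $\sigma$ with $F_\sigma\equiv 1 \pmod y$. Hence the fixing subgroup is the preimage of $\varprojlim (R_n\rtimes(R_n)_1^\times)=\Phi_{\infty,1}$. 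This group is pro-$p$ because each finite level has order $q^n\cdot q^{n-1}$.

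\textbf{Step 2: counting characters.} Continuous characters to $\F_p$ of a pro-$p$ group factor through the Frattini quotient. Passing to the limit in Lemma \ref{lemma:maximal}, the Frattini quotient of $\Phi_{\infty,1}$ is $\F_q\times (H_\infty/H_\infty^p)$, where $H_\infty = 1+y\F_q[\![y]\!]$. The group $H_\infty$ is the classical one-unit group of $\F_q[\![y]\!]$. By the standard Artin--Hasse decomposition, $H_\infty\cong \prod_{p\nmid i}\Z_p^{r}$, topologically generated by $\prod_{i}(1-\gamma_j y^i)$-type elements. Hence $H_\infty/H_\infty^p$ has an $\F_p$-basis indexed by pairs $(i,j)$ with $p\nmid i$ and $1\le j\le r$. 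The $\F_q$ factor contributes $r$ more dimensions.

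\textbf{Step 3: matching characters to field extensions.} We match the characters to the two factors separately.

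The $\F_q$ factor is the quotient $\Phi_{\infty,1}\to R_1=\F_q$, $(\lambda,\mu)\mapsto\lambda(0)$. It corresponds to the extension $F_1(\alpha_1)/F_1$, where $\alpha_1^q+t\alpha_1+s=0$. Substitute $\alpha_1=\theta z$. Since $\theta^{q-1}=-t$, we get $\theta^q(z^q-z)=-s$, so $z^q-z=-s/\theta^q$. The $\F_p$-subextensions of this $\F_q$-extension are generated by the roots of $x^p-x-\mathrm{Tr}$-type combinations. Concretely, the Artin--Schreier characters $\psi_\ell$ for $\gamma_\ell s/\theta^q$ span the dual of $\F_q$; we pass from $\F_q$-Artin--Schreier to $\F_p$-Artin--Schreier via the trace form, and the sign is absorbed.

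The $H$ factor corresponds to the subfields of $F_\infty=\bigcup_n K(t,s)(\ker L^n_t)$. Over $\F_q(t)$, these are the Carlitz-type extensions: $L_t$ is $\widetilde\phi$-like, namely the Drinfeld module $t\mapsto$ hmm. More precisely, $L_t(x)=x^q+tx$ is the Carlitz action of $-(-t)$, so $\ker L^n_t=\Lambda_{t^n}$ for the Carlitz module up to the twist $t\mapsto -t$. By Theorem \ref{thm:Hayes}(1) these extensions are ramified only at $t=0$ and at infinity, tamely at infinity with inertia degree $1$. The maximal $p$-elementary abelian subextension of $F_\infty/F_1$ is therefore wildly ramified only at $t=0$ and split-ish at infinity. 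Corollary \ref{cor:all_pexts_are_where_we_like} and Corollary \ref{all_cyclic_unramified} identify its Artin--Schreier generators with the span of $\gamma_j/t^i$, $p\nmid i$. Comparing dimensions at each finite level $n$ gives equality.

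We also need these generators to stay independent after base change to $K(s)(\theta)$. This follows because $F_1/K(t,s)$ has degree prime to $p$, together with the regularity of $\F_q(t)\subset K(t,s)$.

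\textbf{Step 4: joint independence.} Finally we show that the $\psi_\ell$ and $\chi_{i,j}$ together remain independent. The $\psi$'s are nontrivial on the translation subgroup $R$, while the $\chi$'s factor through $H$. Alternatively, $s$ is transcendental and the $\psi$-extensions are ramified at $s=\infty$, while the $\chi$-extensions are not. Once independence is established, the dimension count of Step 2 forces the listed characters to be a basis.

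\textbf{Main obstacle.} The delicate step is Step 3 for the $H$ factor. There we must pass from Hayes' description over $\F_q(t)$ to the twisted module $L_t$ over $K(s,t)$. We must also check that the restriction to $F_1$ and splitting at infinity match the conditions of Corollary \ref{all_cyclic_unramified}. In particular, we need the place at infinity of $F_1$ to be totally split in the relevant $p$-extensions, so that the constant term $a_0$ vanishes.
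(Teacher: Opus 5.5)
Your outline follows the paper's route: $\theta=\beta_1$ is fixed exactly by $\Phi_{\infty,1}$, characters are split along the abelianization $\F_q\times R_{\infty,1}$, the $\F_q$-factor is handled by $\alpha_1=\theta z$ and the trace, and the unit factor by Hayes together with Corollaries \ref{all_cyclic_unramified} and \ref{cor:all_pexts_are_where_we_like}. However, the step that transfers the $\F_q(t)$-statements to $K(t,s)(\theta)$ does not go through as written.

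The regularity of $K(t,s)/\F_q(t)$ that you invoke is false whenever $K\supseteq\F_{q^k}$ for some $k>1$ (e.g.\ $K=\overline{\F}_q$, or $K=\overline{\F}_2(t)$ as in the paper's examples). In that case $\F_{q^k}(t)\subseteq K(t,s)$ is algebraic over $\F_q(t)$.

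Independence is also only half of what you need. You must show that every $\F_p$-character of $\gal(F_\infty/F_1)$ is a combination of the $\widetilde\chi_{i,j}$. An injective restriction map does not give this, since an $\F_p$-character of a closed subgroup need not extend (e.g.\ $p\Z_p\subset\Z_p$).

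The missing input is that restriction $\gal(K(t,s)_\infty(L_t,0)/K(t,s))\to\gal(\F_q(t)_\infty(L_t,0)/\F_q(t))$ is an \emph{isomorphism}. This follows from Lemma \ref{gal_gp_structure} applied over $K$ and over $\F_q$. With the same $\beta_n$ on both sides, both groups map isomorphically onto $R_\infty^\times$ via $\sigma\mapsto F_\sigma$, compatibly with restriction; this is precisely the paper's reduction to $K=\F_q$ through the field diamond. The underlying reason is that the Carlitz tower is totally ramified at $t=0$, hence has exact constant field $\F_q$, not that $K(t,s)/\F_q(t)$ is regular.

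After that, use $R_\infty^\times=\F_q^\times\times R_{\infty,1}$, with $\F_q^\times$ of order prime to $p$, to identify characters of $R_{\infty,1}$ with $p$-cyclic extensions of $\F_q(t)$ inside the Carlitz tower. This descent is what makes the corollaries applicable at all, since they are statements over $\F_q(t)$, not over $F_1$.

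Relatedly, the last sentence of Step 4 is not a valid argument. In a countably-infinite-dimensional space, an independent family indexed like a basis need not span. The level-by-level comparison you allude to in Step 3 would require the conductors of the $\chi_{i,j}$, which you do not compute.

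None of this is needed once the transfer is fixed. The $\widetilde\psi_\ell$ are a basis of the characters through $\F_q$, and the $\widetilde\chi_{i,j}$ a basis of those through $R_{\infty,1}$. Since the Frattini quotient is the direct product $\F_q\times R_{\infty,1}/R_{\infty,1}^p$, their union is a basis, which gives joint independence and spanning at once. The Artin--Hasse count of Step 2 then becomes superfluous.

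The total splitting at infinity that you flag as the main obstacle is immediate. By Theorem \ref{thm:Hayes}(1), the place at infinity of $\F_q(t)[\Lambda_{t^n}]$ is tamely ramified with residue degree $1$. So its decomposition group has order prime to $p$ and is killed in every $p$-quotient.

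Finally, with the paper's normalization $t\mapsto t+\tau$ one has $L_t=\phi_t$ exactly, so $\ker L_t^n=\Lambda_{t^n}$ with no twist.
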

\begin{proof}
    It it enough to prove our statement in the case $K = \F_q$. Indeed Lemma \ref{gal_gp_structure} holds for any $\F_q$-field, hence for any other $K$ we have a diamond of fields
    \[
\begin{tikzcd}
  & K_\infty \arrow[dr, -] & \\
K(t,s) \arrow[ur, -, "\Phi_\infty"] &   & (\F_{q})_\infty \arrow[dl, - , "\Phi_\infty"]\\
  & \F_q(t,s) \arrow[ul, -] &
\end{tikzcd}   
\]
and the restriction of automorphisms from $K_\infty$ to $(\F_{q})_\infty$ gives an isomorphism between $\gal(K_{\infty}/K(t,s))$ and $\gal((\F_{q})_\infty/\F_q(t,s))$. This also gives a bijection between the respective subextensions, and in particular $\F_q(\theta)$ corresponds to $K(\theta)$.
    From now on we suppose $K = \F_q$ and we use the notation $K_\infty = (\F_{q})_\infty$ for the sake of brevity. 

    The isomorphism \eqref{eq:GenGal=Phi1} is a consequence of Lemma \ref{gal_gp_structure}. In fact the Galois group of $K_\infty/\F_q(t,s)$ is 
    $\Phi_\infty = \varprojlim  (R_n^q) \times  (R_n^q)^\times$, and we now see that the subextension $\F_q(t,s)(\theta)$ is the fixed field of $\Phi_{\infty, 1}$: following \eqref{eq:beta_i}, we have $\theta = \beta_1 \in \ker(L)$, hence an automorphism $\sigma \leftrightarrow (G_\sigma,F_\sigma)$, with $F_\sigma = a_0 + \ldots + a_d y^d$ acts on $\theta$ as 
    $$ \sigma(\theta) = F_\sigma(L)(\theta) = a_0\theta \ .$$
    
    Every continuous character from $\Phi_{\infty,1}$ to $\F_p$ factors through its abelianization, which by Lemma \ref{lemma:maximal} is  
    $$ \Phi_{\infty,1}^{\text{ab}} = (\llbracket\F_q\rrbracket/(y)) \times R_{\infty,1} =  \F_q \times R_{\infty,1}  \quad \text{with } \quad R_{\infty,1} = (1+y\F_q\llbracket y\rrbracket).
    $$
    In particular any $\F_p$-character of $\phi_{\infty,1}$ can be written uniquely as the product of a character $\eta\colon R_{\infty,1} \to \F_p$ and a character $\xi \colon \F_q \to \F_p$. 
    

    Let $\pi\colon  \Phi_{\infty,1} \to R_{\infty,1}$ be the natural projection.
    The space of characters  $\F_q \to \F_p$ is $r$-dimensional, hence if we find a basis $(\mu_1, \mu_2 \ldots)$ for the characters $ R_{\infty,1} \to \F_p$, and $r$ linearly independent characters $\eta_1,\ldots,\eta_r \colon  \Phi_{\infty,1} \to \F_p$ that factors through $\Phi_{\infty,1} \to  \Phi_{\infty,1}^{\text{ab}}  \to \F_q$, then the $\eta_i$'s, together with  
    the induced characters 
    $ \Phi_{\infty,1} \overset\pi\to R_{\infty,1} \overset {\mu_i}\to \F_p$ for $i\ge1$, form a basis of the space of all continuous characters. 

   Consider the natural projection $ \Phi_{\infty,1} \to  \Phi_{\infty,1}^{\text{ab}}  \to \F_q$, whose fixed field is exactly  the splitting field $M$ of $f$ over $K(t,s)(\theta)$; such splitting field, after a change of variable, is defined by the equation $x^q-x+s/\theta^q=0$. The field extension $M/K(t,s)(\theta)$ has precisely $r$ cyclic $p$-subextensions. 
   In order to describe them, first notice that since $K$ contains $\F_q$, then $M$ contains the splitting field of $(\gamma x)^q-\gamma x+s/\theta^q$ for any $\gamma\in \F_q$, that is equivalent to saying that $M$ contains the splitting field of $x^q-x+\gamma s/\theta^q$ for every $\gamma\in \F_q$. On the other hand one has the following elementary fact: if $L$ is an $\F_q$-field and $u\in L$ takes the form $z^q-z$ for some $z\in L$, then $u\in \mathcal U(K)$. In fact, if say $q=p^k$ and $w\coloneqq \sum_{i=1}^{k-1}z^{p^i}$, then $w^p-w=u$. This shows that $M$ contains the splitting field of $x^p-x+\gamma s/\theta^q$ for every $\gamma\in \F_q$. Now letting, for every $j\in \{1,\ldots,r\}$, $\eta_j$ be the character of $K_\infty$ that cuts out $x^p-x+\gamma_js/\theta^q$, one sees easily that $\eta_1,\ldots,\eta_r$ are $\F_p$-linearly independent, and of course they coincide with $\widetilde{\psi}_1,\ldots,\widetilde{\psi}_r$. Since they are exactly $r$, they form a basis of the space of characters $\Phi_{\infty,1}$ that factor through $\Phi_{\infty,1}^{ab}/R_{\infty,1}\cong \F_q$.
    
    We now look for the $\mu_i \colon R_{\infty,1} \to \F_p$. Notice that, since $(R_n^q)^\times = (R_n^q)_1^\times\times \F_q^\times$ canonically (indeed $(R_n^q)_1^\times$ is the $p$-Sylow of the abelian group $(R_n^q)^\times$), we also  have 
    $$R_\infty^\times \coloneqq \F_q\llbracket y\rrbracket^\times = \varprojlim (R_n^q)^\times \cong \F_q^\times \times R_{\infty,1} \ .$$
    In particular, since $\F_q^\times$ has order coprime to $p$, characters $ R_{\infty,1} \to \F_p$ extend and correspond to characters  $R_{\infty}^\times \to \F_p$. By Lemma \ref{gal_gp_structure} we have
    $$ R_{\infty}^\times = \gal(\F_q(t,s)_\infty(L_t,0)/\F_q(t,s)) = \gal(\F_q(t)_\infty(L_t,0)/\F_q(t)),$$
    so that the kernel of a non-trivial character $\mu\colon R_{\infty}^\times \to \F_p$ 
    corresponds to a cyclic $p$-extensions  $M/\F_q(t)$ contained in $\F_q(t)_\infty(L_t,0)$, (or, equivalently, when we consider $\mu$ as a character on $\Phi_{\infty,1}$), to the  cyclic $p$-extensions  $M \cdot \F_q(t,s)(\theta) /\F_q(t,s)(\theta)$. 

    Now we remark that the operator $L_t$ is nothing else than the operator $\phi_t$ for $\phi$ the Carlitz module defined in Section \ref{sec:preliminaries}; it follows that the field $\F_q(t)_\infty(L_t,0)$ 
    is exactly the union of all the  $\F_q(t)[\Lambda_{t^i}]$'s, 
    which by Theorem \ref{thm:Hayes} is ramified only at the places at $t=0$ and infinity, and it is tamely ramified at infinity, with inertia degree equal to $1$. In particular its $p$-cyclic subextensions are only ramified at $t=0$
    and totally split at $\infty$, and by  Corollary \ref{cor:all_pexts_are_where_we_like} all such $p$-cyclic extensions of $\F_q(t)$ are contained in $\F_q(t)_\infty(L_t,0)$. 
    Corollary \ref{all_cyclic_unramified} now implies immediately the claim.
\end{proof}

\section{Proof of Theorem \ref{thm:main}}

We use a specialization argument.

Since the polynomial $f = f_{t_0,s_0} \in K[x]$ in Theorem \ref{thm:main} is a specialization of $\tilde f  = f_{t,s} \in K(t,s)[x]$ studied in Section \ref{sec:generic} and since the iterates of both polynomials are separable, then we can view the Galois $G_{\infty}(f,0)$ of $K(f,0)_\infty/K$ as a subgroup of the Galois of $G_{\infty}(\tilde f,0)$, which is isomorphic to $\Phi_\infty$ by Lemma \ref{gal_gp_structure}. 

To be more precise we describe the embedding of the two Galois groups as follows. 
Denote $L = L_{t_0} = x^q+t_0x$ and $\tilde L = L_{t} = x^q+tx$.
Let $\calO = K[t,s,t^{-1}] \subset K(t,s)$ and let
$\calO_n = \calO[\tilde \alpha_n,\tilde \beta_n, \theta] \subset K(t,s)_n(f,0)$, with $\theta^{q-1}=-t$, $\tilde \beta_1 = \theta$,  $\tilde\beta_n$ a root of $\tilde L(x) - \tilde \beta_{n-1}$ as in \eqref{eq:beta_i}, and analogously
$\tilde \alpha_1$ a root of $\tilde f(x)$ and $\tilde \alpha_n$ of $\tilde f(x)-\tilde \alpha_{n-1}$. 
Then, choosing analogous roots $\alpha_n, \beta_n$ of $f^n, L^n$ gives maps 
$$ \psi = \psi_n \colon \calO_n \to K_n(f,0) \ , \quad \psi(\tilde \beta_n) = \beta_n\,, \quad \psi(\tilde \alpha_n) = \alpha_n \ .$$
In particular, if we identify $G_{\infty}(\tilde f,0)$ with $\Phi_\infty$ using the $\iota$ map for $\tilde f$ (see Lemma \ref{gal_gp_structure}), then the $\iota$ map relative to $f$ gives an embedding $G_{\infty}(f,0) \hookrightarrow G_{\infty}(\tilde f,0)$. Since we imposed $\psi(\tilde \alpha_n) = \alpha_n$, $\psi(\tilde \beta_n) = \beta_n$, for each $\sigma\in  G_{\infty}(f,0)$ and each $z\in \calO_n$ we have 
$$\psi(\sigma z) =\sigma \psi(z) \ .$$
Notice that by how  $G_{\infty}(\tilde f,0)$ acts in \eqref{eq:Gnf0_emb}, then $\calO_n$ is Galois-stable.



We notice a technical fact about $\psi$ for future reference. If $\alpha \in \calO_n$ is an element with minimal polynomial over $K(t,s)$ equal to  $\mu_\alpha = x^d + a_{d-1}x^{d-1} + \ldots + a_0 \in \calO[x]$ 
and if $\psi(\mu_\alpha) = x^d + \psi(a_{d-1})x^{d-1} + \ldots + \psi(a_0)$ is separable in $K[x]$, then all the Galois conjugates $\alpha_1, \ldots, \alpha_d$ of $\alpha$ have different images $\psi(\alpha_i)$: indeed, writing $\mu_\alpha = \prod (x-\alpha_i)$, we get a factorization $\psi(\mu_\alpha)  = \prod (x-\psi (\alpha_i))$, and the separability of $\psi(\mu_\alpha)$ gives the distinctness of $\psi(\alpha_i)$'s.

We start proving the second part of the statement, namely the case where $\theta = \sqrt[{q-1}]{-t}$ belongs to $K$. In particular the map $\calO \to K$ extends to a map $\calO' = \calO[\theta] \to K$ sending $\tilde \beta_1 = \theta \mapsto \beta_1 = \theta_0$ and, using Lemma \ref{theorem:p-extension_theta}, the map $\iota$ gives an embedding 
$$G_{\infty}(f,0) \hookrightarrow \gal(K(t,s)_\infty(\tilde f,0)/K(t,s)(\theta)) = \Phi_{\infty,1} \ .$$
Indeed if $\sigma \in G_{\infty}(f,0)$ does not lie in $\Phi_{\infty,1}$, it means that $\sigma \theta \neq \theta$, which, since $\psi(\mu_\theta) = x^{q-1}+t_0$ is separable, implies that $\sigma(\theta_0) \neq \theta_0$ which leads to a contradiction.

Then $G_{\infty}(f,0)$ is not the full $\Phi_{\infty, 1}$ if and only if it is contained in a proper maximal subgroup of $\Phi_{\infty, 1}$, hence in a proper maximal closed subgroup, since  
the map $\iota$ is obtained as a limit of the maps in \eqref{eq:Gnf0_emb}, hence it is continuous. 
Such maximal closed subgroups $H$ correspond to minimal subfield of $K(t,s)_\infty(\tilde f, 0)/K(\theta)$, which are characterized in Lemma \ref{theorem:p-extension_theta}: they are generated by the roots of 
$$f_u(x)  = x^p-x-u \quad \text{ for certain }u = u(H)\ .$$ 
In particular, $G_{\infty}(f,0)$ is contained in a conjugate of $H$ if and only if $x^p-x- \psi(u(H))$ has a root in $K$: 
given $\alpha$  a root of $f_u$, then $G_{\infty}(f,0)$ is contained in a conjugate of $H$ if and only if there exists a conjugate $\alpha'$ of $\alpha$ such that for all $\sigma \in G_{\infty}(f,0)$ we have $\sigma \alpha' = \alpha'$; anyway, since $\psi(\mu_\alpha) = \psi(\mu_{\alpha'}) = x^p-x- \psi(u(H))$ is separable, the images under $\psi$ of all conjugates of $\alpha$ are distinct and are exactly the roots of $\psi(\mu_\alpha)$, hence ``$\sigma \alpha' = \alpha'$ for all  $\sigma \in G_{\infty}(f,0)$'' is equivalent to ``$\sigma \psi(\alpha') = \psi(\alpha')$ for all  $\sigma \in G_{\infty}(f,0)$'', which is equivalent to $\psi(\mu_\alpha)$ having a root in $K$.

We deduce that $G_{\infty}(f,0)$ is not contained in a maximal closed subgroup if and only $K$ if, for all $u \neq 0$ as in Lemma \ref{theorem:p-extension_theta}, the polynomial  $x^p-x- \psi(u)$ does not have a root in $K$. By Artin-Schreier theory, this is equivalent to saying that the $\psi(u_i)$'s are independent in $\calV(K)$, for $(u_i)$ a basis of the possible $u\in \calV(K(\theta))$. The basis given in Lemma \ref{theorem:p-extension_theta} corresponds, under $\psi$, to the basis given in the statement of Theorem \ref{thm:main}.

We now turn to the first part of the Theorem \ref{thm:main}.
Assuming $q>2$, we want to characterize when the embedding $\iota \colon G_{\infty}(f,0) \to G_{\infty}(\tilde f,0) = \Phi_{\infty}$ is surjective. A necessary condition for this to happen is that the composition 
\begin{equation} \label{eq:modp-1}    
G_{\infty}(f,0) \to G_{\infty}(\tilde f,0) \to \gal(K(t,s)(\theta)/K(t,s)) = \F_q^\times  \ ,
\end{equation}
is surjective. The above map only depends on the Galois action on $\theta$ and $\theta_0$, i.e. it factors through the simpler map 
$\gal(K(\theta_0)/K )\to \gal(K(t,s)(\theta)/K(t,s))$, which is surjective if and only if  $K(\theta_0)/K$ has degree $q{-}1$, which by Kummer theory is equivalent to $-t_0$ not being an $\ell$-th power for all $\ell\mid (q-1)$, which is the first condition stated in the theorem. 

Moreover $\iota$ is surjective if and only if the map \eqref{eq:modp-1} is surjective and its kernel $\Psi_{\infty, 1}$ is equal to $G_{\infty}(f,0) \cap \Psi_{\infty, 1}$. The latter is the Galois group of $K_\infty(f,0)$ over $K(\theta_0)$, i.e the dynamical Galois group of $f$ considered as a polynomial over $K(\theta_0)$, hence, by the second part of the theorem, which we have already proven, $\Psi_{\infty, 1}=G_{\infty}(f,0) \cap \Psi_{\infty, 1}$ if and only if the set
$$\left\{\frac{	\gamma_js_0}{\theta_0^q}\colon j\in \{1,\ldots,r\}\right\}\cup\left\{\frac{\gamma_j}{t_0^i}\colon j\in \{1,\ldots,r\},\,i\ge 1,\,p\nmid i\right\}$$
is $\F_p$-linearly independent in $\calV(K(\theta_0))$.

We are left to prove, assuming that $K(\theta_0)/K$ has degree $q{-}1$, that this condition is equivalent to conditions $(2)$ and $(3)$ in the theorem.
One implication is clearer: if the $\gamma_j/t^i$ are independent in $V(K(\theta_0))$ then they are also independent in $\calV(K)$. Moreover, if $\{\gamma_1s_0/\theta_0^q,\ldots,\gamma_rs_0/\theta_0^q\}$ is in an independent set in $\calV(K(\theta_0))$, then the Galois group of the splitting field $M$ over $K(\theta_0)$ of the polynomial $x^q-x-s_0/\theta_0^q$ has $r$ independent characters to $\F_p$; since such Galois group is contained in $\F_q$, it must then coincide with it. But of course up to a change of variables this is the same as the Galois group of $f$. By the degree rule, this shows that the Galois group of $f$ over $K$ is indeed $\F_q\rtimes\F_q^\times$. In particular, by \eqref{eq:Gnf0_emb} the Galois group of $f$ permutes all its roots, and hence $f$ is irreducible.

Now the other implication. Assume that $(1)$, $(2)$ and $(3)$ hold, so that in particular the extension $K(\theta_0)/K$ has degree $q{-}1$. First of all if the $\gamma_j/t_0^i$'s are independent in $\calV(K)$, then they are also independent in $\calV(K(\theta_0))$: indeed independence of the $\gamma_j/t_0^i$ in $\calV(K)$ is equivalent  to the corresponding Artin-Schreier extensions being linearly disjoint over $K$, which, since these extensions are Galois of degree $p$ and since $K(\theta_0)/K$ has degree coprime to $p$, is equivalent to the 
corresponding Artin-Schreier extensions being linearly disjoint over $K(\theta_0)$, which is what we claim. 
We are left to prove that $\{\gamma_1s_0/\theta_0^q,\ldots,\gamma_rs_0/\theta_0^q\}$ is linearly independent from the $\gamma_j/t_0^i$ in $\calV(K(\theta_0))$. 
First of all the irreducibility of $f$ implies that the cardinality of $\gal(f/K)$ is a multiple of $q$ and, since it is also multiple of $[K(\theta_0):K] = q-1$, that the embedding 
$    \iota_1 \colon \gal(f/K) \to  \F_q \rtimes \F_q^\times  $   
is an isomorphism. 
 In particular, we also have 
\begin{equation}\label{eq:last_iota_11_surg}
\gal(K(f,0)/K(\theta_0))\cong \gal(K(\tilde f,0)/K(\theta)) \cong \F_q \ ,
\end{equation}
and by Theorem \ref{theorem:p-extension_theta} the cyclic subextensions of $K(f,0)/K(\theta_0)$ correspond to $\F_p$-combinations of $\gamma_js_0/\theta_0^q$, up to $\F_p^\times$.

Now suppose by contradiction that there is a non-trivial relation
\begin{equation}\label{eq:last_linear_relation}
\sum _{i,j} b_{i,j} \frac{\gamma_j}{t_0^i} = a_1 \frac{\gamma_1 s_0}{\theta_0^q} + \ldots +a_r \frac{\gamma_rs_0}{\theta_0^q}  \quad \text{in } \calV(K(\theta))
\ ,
\end{equation}
for suitable $a_\ell,b_{i,j} \in \F_p$, with $b_{i,j}=0$ except for finitely many indices.
In particular, if $\gamma \coloneqq a_1 \gamma_1 + \ldots + a_r \gamma_r \in \F_q$ and $U \coloneqq \sum _{i,j} b_{i,j} \gamma_j T^i \in \F_q[T]$, then the above relation implies that the polynomials
$x^p-x-\gamma \frac{s_0}{\theta_0^q}$ and $x^p-x- U(1/t_0)$ define the same extension $M$ of $K(\theta_0)$.
Such extension needs to be abelian over $K$: the polynomial $x^p-x- U(1/t_0)$ defines an abelian extension of $M'/K$, and $M/K$ is the compositum of the two abelian extensions $K(\theta_0)/K$ and $M'/K$. 

We now derive a contradiction looking at the polynomial $x^p-x-\gamma \frac{s_0}{\theta_0^q}$, which lets us interpret $M$ as a subfield of $K(f,0)$. 
Since \eqref{eq:last_linear_relation} is a nontrivial relation and the $\gamma_j/t_0^i$'s are $\F_p$-independent, the vector $(a_1, \ldots, a_r)$ is nonzero. Then, by \eqref{eq:last_iota_11_surg}, the polynomial $x^p-x-\gamma \frac{s_0}{\theta_0^q}$ defines a $p$-cyclic subextension of $K(f,0)/K(\theta_0)$, namely the fixed field of a certain subgroup $H \subseteq \F_q$ of index $p$.
If $q=p$, the only index $p$ subgroup is $H = \{ 0\} \subseteq \F_p$, implying that $M = K(f,0)$. It follows that $\gal(M/K) = \F_q \rtimes \F_q^\times $, that is not abelian for $q>2$.
If $q>p$, the subgroup $H \subseteq \F_q$ must be proper.
However, the subgroup $H\rtimes \F_q^\times\subseteq \F_q \rtimes \F_q^\times = \gal(K(f,0)/K)$ is not normal, since the action of $\F_q^\times $ on $\F_q$ is transitive on $\F_q\setminus \{0\}$. This contradicts the fact that $M/K$ is Galois, and concludes therefore the proof.

\bibliographystyle{plain}
\bibliography{bibliography}
\end{document}